%
%
%
%

\documentclass{amsart}

\usepackage{amsmath, amscd, amssymb}
\usepackage[all]{xy}
\usepackage{color}

\newcommand{\cal}{\mathcal}


\newcommand{\bC}{{\Bbb C}}

\newcommand{\bP}{{\Bbb P}}
\newcommand{\bQ}{{\Bbb Q}}

\newcommand{\bZ}{{\Bbb Z}}


\newcommand{\cO}{{\cal O}}

\newcommand{\cZ}{\mathcal{Z}}

\newcommand{\ds}{\displaystyle}
\newcommand{\Int}{\displaystyle\int}



\newtheorem{theorem}{Theorem}[section]

\newtheorem{proposition}[theorem]{Proposition}
\newtheorem{lemma}[theorem]{Lemma}

\theoremstyle{remark}
\newtheorem{remark}{Remark}[section]

\theoremstyle{definition}

\newtheorem{conjecture}{Conjecture}

\definecolor{light-pink}{rgb}{1,.90,.90}
\definecolor{light-green}{rgb}{.95,1,.95}

\definecolor{yellow}{rgb}{1,1,0}
\definecolor{orange}{rgb}{1,.7,0}
\definecolor{red}{rgb}{1,0,0}
\definecolor{white}{rgb}{1,1,1}

\definecolor{A}{rgb}{.75,1,.75}

\begin{document}

\title{Tautological Integrals on Symmetric Products of Curves}

\author{Zhilan \uppercase{Wang}}             
\address{Institute of Mathematics, Academy of Mathematics and Systems Science, Chinese Academy of Sciences, Beijing 100190, P. R. China}
\email{zlwang@amss.ac.cn}
   

\begin{abstract}
We propose a conjecture on the generating series of Chern numbers of tautological bundles on symmetric products of curves and establish the rank 1 and rank -1 case of this conjecture. Thus we compute explicitly the generating series of integrals of Segre classes of tautological bundles of line bundles on curves, which has a similar structure as Lehn's conjecture for surfaces.
\end{abstract}      

\maketitle

\section{Introduction}

Let $X$ be a smooth quasi-projective connected complex variety of dimension $d$, and denote by $X^{[n]}$ the Hilbert scheme of $n$ points on $X$. Let $\cZ_n\subset X\times X^{[n]}$ be the universal family with natural projections $p_1: \cZ_n \to X$ and $\pi: \cZ_n \to X^{[n]}$ onto the $X$ and $X^{[n]}$ respectively.
For any locally free sheaf $F$ on $X$, let $F^{[n]} = \pi_*(\cO_{\cZ_n}
\otimes p_1^*F)$, which is called the tautological sheaf of $F$. 

When $d=2$, many invariants of the Hilbert schemes of points
on a projective surface can be determined explicitly by the corresponding invariants
of the surface, including the Betti numbers \cite{Got}, Hodge numbers \cite{Got-Soe},
cobordism classes \cite{Ell-Got-Leh}, and elliptic genus \cite{Bor-Lib}, etc.. G. Ellingsrud, L. Göttsche and M. Lehn showed in \cite{Ell-Got-Leh} that for a polynomial in Chern classes of tautological sheaves and the tangent bundle of $X^{[n]}$, there exists  a universal polynomials in Chern classes of the corresponding sheaves and the tangent bundle on $X$ such that the integrals of these two polynomials over $X^{[n]}$ and $X$ are equal. A direct consequence is that the generating series of certain tautological integrals can be written in universal forms of infinite products; though it is not easy to find explicit expressions. For example, various authors have considered the computation of the integrals of top Segre classes of tautological sheaves of a line bundle on a surface \cite{ellingsrud1996bott,le1987formules,  Leh, tikhomirov1994standard, tikhomirov1994top, troshina1994degree}. M. Lehn made a conjecture on the generating series as follows:

\begin{conjecture}\label{conj:Lehn}(M. Lehn \cite{Leh})
For a smooth projective surface S and a line bundle L on it, define $$N_n=\int_{S^{[n]}}s_{2n}(L^{[n]}),$$

then

\begin{equation}
\sum\limits_{n\geq0}N_n z^n=\frac{(1-k)^a(1-2k)^b}{(1-6k+6k^2)^c}.
\end{equation}

Here $a=HK-2K^2$, $b=(H-K)^2+3\chi(\mathcal{O}_S)$ and $c=\frac{1}{2}H(H-K)+\chi(\mathcal{O}_S)$, where $H$ is the corresponding divisor of $L$ and $K$ is the canonical divisor.

And $$k=z-9z^2+94z^3-\cdots\in \bQ[[z]]$$

is the inverse of the function
\begin{equation*}
z=\frac{k(1-k)(1-2k)^4}{(1-6k+6k^2)^3}.
\end{equation*}
\end{conjecture}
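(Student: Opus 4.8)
The plan is to deduce the formula from the universality theorem of Ellingsrud, Göttsche and Lehn \cite{Ell-Got-Leh}. Since $L^{[n]}$ has rank $n$ and $S^{[n]}$ has dimension $2n$, the Segre class $s_{2n}(L^{[n]})$ is of top degree, so $N_n$ is a genuine number which, by universality, is a polynomial in the four Chern numbers $H^2$, $H\cdot K$, $K^2$ and $\chi(\cO_S)$; in particular $N_n$ is a deformation invariant of the pair $(S,L)$. Moreover the bordism structure underlying that argument forces the generating series to be multiplicative in these invariants, so that
\begin{equation*}
\sum_{n\geq 0} N_n z^n = A(z)^{H^2}\,B(z)^{H\cdot K}\,C(z)^{K^2}\,D(z)^{\chi(\cO_S)}
\end{equation*}
for four universal series $A,B,C,D\in\bQ[[z]]$ with constant term $1$. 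Taking logarithms and collecting the four invariants, the conjecture becomes equivalent to the closed forms $A=(1-2k)(1-6k+6k^2)^{-1/2}$, $B=(1-k)(1-2k)^{-2}(1-6k+6k^2)^{1/2}$, $C=(1-k)^{-2}(1-2k)$ and $D=(1-2k)^{3}(1-6k+6k^2)^{-1}$, with $k=k(z)$ the prescribed inverse function. Everything thus reduces to identifying these four universal series.

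To pin them down I would evaluate $N_n$ on a family of test pairs $(S,L)$ whose Chern numbers span the four-dimensional parameter space, using deformation invariance to specialize freely. Abelian surfaces ($K=0$, $\chi=0$) with varying polarization isolate $A$, and K3 surfaces ($K=0$, $\chi=2$) then isolate $D$; the plane $\bP^2$ and the Hirzebruch surfaces supply the two remaining directions $H\cdot K$ and $K^2$. The decisive family is that of products and $\bP^1$-bundles over a curve $C$: for $S=C\times\bP^1$ one can fiber or degenerate $S^{[n]}$ so that the top Segre integral is controlled by tautological integrals on the symmetric products $\mathrm{Sym}^m C$, which are precisely the quantities computed in this paper. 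Matching the resulting curve generating series against the multiplicative ansatz should determine the universal series, and the implicit substitution $z=k(1-k)(1-2k)^4(1-6k+6k^2)^{-3}$ ought to emerge directly from the combinatorics of these curve integrals.

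The main obstacle is that $A,B,C,D$ are defined only implicitly through $k(z)$, so no finite collection of numerical values of $N_n$ can verify the closed form to all orders; a structural identity is unavoidable. The cleanest route is to make the degeneration to curves rigorous and to control the top Segre class $s_{2n}$ along it, which is delicate because Segre classes are not multiplicative and the excess contributions coming from the boundary and from the non-reduced fibers must be tracked carefully. An alternative is to follow a Voisin-style geometric reduction, rewriting the Segre integral via intersection theory on nested Hilbert schemes and reducing to the curve case. In either approach the crux, and the hardest step, is to prove that the curve-side generating function satisfies exactly the implicit functional equation defining $k$; I expect this to be where the real work lies, the universality and parameter-counting above being comparatively formal.
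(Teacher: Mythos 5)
First, a point of order: the paper does not prove this statement. Conjecture \ref{conj:Lehn} is quoted from Lehn as background motivation, and the paper explicitly records that it is open at the time of writing (with only the K3 case settled, by Marian--Oprea--Pandharipande via Quot schemes). So there is no proof in the paper to compare yours against; your proposal has to stand on its own, and as it stands it is a plan rather than a proof.

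The formal part of your argument is correct and standard: by Ellingsrud--G\"ottsche--Lehn universality, $N_n$ is a universal polynomial in $H^2$, $H\cdot K$, $K^2$, $\chi(\mathcal{O}_S)$, the generating series factors as $A^{H^2}B^{H\cdot K}C^{K^2}D^{\chi(\mathcal{O}_S)}$, and your extraction of the four conjectural closed forms for $A,B,C,D$ from Lehn's exponents is accurate. (This is exactly parallel to what the paper does for curves in Theorem \ref{thm:Universal}, where the analogous reduction leaves only two universal series and only the pairs $(\bP^1,r\mathcal{O})$ and $(\bP^1,(r-1)\mathcal{O}\oplus\mathcal{O}(-1))$ to compute.) The gap is everything after that. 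Your proposed mechanism for determining the four series --- degenerating $S^{[n]}$ for $S=C\times\bP^1$ so that the top Segre integral is ``controlled by tautological integrals on $\mathrm{Sym}^m C$'' --- is not a known construction, and there is no evident reason it should work: the Hilbert scheme of points on a surface does not fiber over or degenerate to products of symmetric products of curves in a way that preserves the rank-$n$ tautological bundle and its top Segre class, and the non-reduced punctual strata contribute essentially two-dimensional excess terms that have no curve analogue. A concrete warning sign is the change of variables: the curve-side answer in Theorem \ref{thm:Seg1} involves only $z=k(1-k)$, while Lehn's conjecture requires the sextic $z=k(1-k)(1-2k)^4(1-6k+6k^2)^{-3}$; no reduction to the curve computations in this paper can manufacture that substitution. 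You correctly identify this step as ``where the real work lies,'' which is an honest admission that the proof is missing; the arguments that eventually closed the conjecture (Quot-scheme recursions on K3 surfaces, and Voisin's later geometric argument) are of a genuinely different nature from anything sketched here.
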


This conjecture is still open up to now. Recently M. Marian, D. Oprea and R. Pandharipande showed that this conjecture holds for K3 surfaces \cite{Mar-Opr-Pan} by considering integrals over Quot schemes and the recursive localization relations.

J. V. Rennemo \cite{rennemo2012universal} gave a generaliztion of the theorem of G. Ellingsrud, L. Göttsche and M. Lehn: when $d=1$ and $d=2$ the universal property of polynomials in Chern classes of tautological sheaves and tangent bundles holds; when $d>2$, one should consider the universal property of integrals of polynomials only in Chern classes of tautological sheaves over geometric subsets of $X^{[n]}$.

In this article, we focus on the case of $d=1$. For $C$ a smooth projective curve, the Hilbert scheme of $n$ points on $C$ is isomorphic to the $n$-th symmetric product, so it is a smooth projective variety of dimension $n$. We have the following conjecture:


\begin{conjecture}\label{conj:Cn_n}
For $C$ a smooth projective curve and $E_r$ a vector bundle of rank $r$ on $C$, one has

\begin{equation} \label{eqn:Cn_n}
\sum_{n=0}^{\infty} z^n \int_{C^{[n]}} c(E_r^{[n]})=
\exp ( \sum_{n=1}^{\infty} \frac{z^n}{n} (A_n^rd_r+B_n^r e)),
\end{equation}

\begin{equation} \label{eqn:Cn_nminus}
\sum_{n=0}^{\infty} \frac{z^n}{n} \int_{C^{[n]}} c(-E_r^{[n]})=
\exp (\sum_{n=1}^{\infty} \frac{z^n}{n} (C_n^r(-d_r)+D_n^r e)),
\end{equation}

Here $c(E_r^{[n]})$ is the total Chern class, $d_r=\Int_C c(E_r)$ and $e$ is the Euler number of $C$. $A_n^r$, $B_n^r$, $C_n^r$ and $D_n^r$ are integers depending only on $r$ and $n$, which satisfy

$A_n^r=(-1)^{n+1}\ds\binom{rn-1}{n-1}$, $C_n^r=(-1)^{n}\ds\binom{-rn-1}{n-1}=(-1)^{n-1}A_n^{r+1}$, $D_n^r=(-1)^nB_n^{r+1}$.

\end{conjecture}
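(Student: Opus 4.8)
The plan is to upgrade Rennemo's universality for curves to the precise exponential shape of (\ref{eqn:Cn_n})--(\ref{eqn:Cn_nminus}), and then to evaluate the resulting universal coefficients by a single generating-function computation. \textbf{Step 1: existence of the exponential form.} By Rennemo's extension of Ellingsrud--G\"ottsche--Lehn to $d=1$, each $\int_{C^{[n]}}c(E_r^{[n]})$ is a universal polynomial $F_n(d_r,e)$ in the only two characteristic numbers of the pair $(C,E_r)$, namely $d_r=\deg E_r$ and $e=\chi(C)$. Applying this universal formula to disjoint unions, where $(C_1\sqcup C_2)^{[n]}=\bigsqcup_{a+b=n}C_1^{[a]}\times C_2^{[b]}$ and $\deg$, $\chi$ and the tautological functor are all additive, shows that the total series is multiplicative; hence the coefficients $h_n$ of its logarithm are \emph{additive} polynomials in $(d_r,e)$, and additivity forces $h_n(0,0)=2h_n(0,0)=0$, so each $h_n$ is linear homogeneous. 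This yields the shape of (\ref{eqn:Cn_n}) with universal integers $A_n^r,B_n^r$, and the identical argument applied to the Segre class $s(E_r^{[n]})=c(-E_r^{[n]})$ yields (\ref{eqn:Cn_nminus}) with universal $C_n^r,D_n^r$.

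\textbf{Step 2: reduction to line bundles and the formula for $A_n^r$.} Since $\pi\colon\cZ_n\to C^{[n]}$ is finite and flat of degree $n$, the functor $E\mapsto E^{[n]}$ is exact, so a short exact sequence on $C$ induces one of tautological bundles and the splitting principle gives $c(E_r^{[n]})=\prod_{i=1}^r c(L_i^{[n]})$ for the Chern roots $L_i$ of $E_r$. The problem becomes the evaluation of the coupled integrals $\int_{C^{[n]}}\prod_{i=1}^r c(L_i^{[n]})$, which by universality depend on the $L_i$ only through $\sum_i\deg L_i=d_r$. To isolate $A_n^r$ I would specialize to a genus-one curve ($e=0$), where $C^{[n]}$ is a $\bP^{n-1}$-bundle over $C$ and (\ref{eqn:Cn_n}) collapses to $\exp\big(d_r\sum_{n\ge1}A_n^r z^n/n\big)$, and then use Macdonald's presentation of $H^*(C^{[n]})$ together with Grothendieck--Riemann--Roch for the Chern classes of the $L_i^{[n]}$ to reduce the degree-linear part of the integral to a one-variable residue. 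The decisive point is the identity $\tfrac1n\binom{rn-1}{n-1}=\tfrac1{rn}\binom{rn}{n}$, which is precisely $\log\cB_r(z)=\sum_{n\ge1}\tfrac1{rn}\binom{rn}{n}z^n$ for the generalized binomial series $\cB_r=1+z\,\cB_r^{r}$ (equivalently $w=z(1+w)^r$ with $w=\cB_r-1$); I expect Lagrange inversion applied to the Macdonald residue to identify the $e=0$ series with $\cB_r(-z)^{-d_r}$, yielding exactly $A_n^r=(-1)^{n+1}\binom{rn-1}{n-1}$. A second specialization, now varying $e$ at fixed degree, reads off the $B_n^r$.

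\textbf{Step 3: the Segre series and the rank shift.} Running the same residue computation for $s(E_r^{[n]})=\prod_i c(L_i^{[n]})^{-1}$, the expected outcome is the single clean statement that the rank-$r$ Segre series equals the rank-$(r+1)$ Chern series after $z\mapsto-z$; concretely $\int_{C^{[n]}}s(E_r^{[n]})=(-1)^n\int_{C^{[n]}}c(F_{r+1}^{[n]})$ whenever $\deg F_{r+1}=\deg E_r$, i.e.\ the $e=0$ Segre series is $\cB_{r+1}(z)^{-d_r}$. Structurally the shift $r\to r+1$ comes from the universal K-theory sequence $[E^{[n]}]=\chi(C,E)\,[\cO_{C^{[n]}}]-[Rq_*(p_1^*E(-\cZ_n))]$, where $q\colon C\times C^{[n]}\to C^{[n]}$ is the projection and $\chi(C,E)=d_r+\tfrac r2 e$ injects both $d_r$ and $e$, while inverting the total Chern class turns the corrective pushforward into an honest rank-$(r+1)$ tautological contribution. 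Granting this, the relations $C_n^r=(-1)^{n-1}A_n^{r+1}$ and $D_n^r=(-1)^nB_n^{r+1}$ follow by matching coefficients under $z\mapsto-z$, and the closed form for $C_n^r$ is then equivalent to the formula for $A_n^{r+1}$ via the elementary identity $\binom{-rn-1}{n-1}=(-1)^{n-1}\binom{(r+1)n-1}{n-1}$.

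\textbf{Main obstacle.} The real difficulty is Step 2 for $r>1$: the factors $c(L_i^{[n]})$ are coupled through the single integration over $C^{[n]}$ and cannot be decoupled into a product of $r$ independent rank-one series (the case already established in the paper), so the Fuss--Catalan structure must be produced by a genuine Lagrange-inversion evaluation of Macdonald's multi-factor residue rather than by multiplying known rank-one answers. Pinning down the $B_n^r$ and making the rank-shift of Step 3 precise at the level of Chern classes, rather than merely Euler characteristics, are the remaining delicate points.
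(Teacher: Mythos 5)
This statement is a \emph{conjecture} in the paper, not a theorem: the paper itself proves only (a) the existence of universal rational coefficients $A_n^r$, $B_n^r$, $C_n^r$, $D_n^r$, as an instance of Theorem \ref{thm:Universal} (taking $\Psi$ to be the total Chern class and $\Phi=1$), (b) the cases $r=1$ and rank $-1$ for line bundles (Theorems \ref{thm:Cn_1} and \ref{thm:Seg1}), via equivariant localization on $\bC^{[n]}$ and $(\bP^1)^{[n]}$ and the Chu--Vandermonde identity, and (c) numerical verification for $r=2,3,4,5$ and $n<10$; the general case is attributed to Marian--Oprea in Remark \ref{rmk:proof}. So there is no proof in the paper to compare against, and the question is whether your proposal closes the gap the paper leaves open. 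It does not. Your Step 1 is sound and is essentially the paper's own argument: universality plus multiplicativity under disjoint union (the paper phrases this as $H_{\Psi,\Phi}$ factoring through $\bQ^2$, with $(\bP^1,r\cO)$ and $(\bP^1,(r-1)\cO\oplus\cO(-1))$ as ``basis'') yields the exponential form with exponent linear in $(d_r,e)$. But that only gives existence of rational coefficients, which is not the actual content of the conjecture.

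The content of the conjecture --- integrality, the closed form $A_n^r=(-1)^{n+1}\binom{rn-1}{n-1}$, and the rank-shift relations between the $C,D$ and the $A,B$ coefficients --- lives entirely in your Steps 2 and 3, and those steps are not carried out: the decisive evaluation of $\int_{C^{[n]}}\prod_i c(L_i^{[n]})$ is introduced with ``I expect Lagrange inversion \dots to identify'', and the rank shift is prefaced by ``Granting this''. You correctly diagnose, in your closing paragraph, that the coupling of the $r$ factors under a single integral over $C^{[n]}$ is exactly where the difficulty lies --- the paper's own localization reduces the problem to a multi-weight combinatorial sum that the author could only verify numerically for small $r$ and $n$, and for which ``effort fails to find the explicit expressions for higher $r$'' --- but identifying the obstacle is not the same as overcoming it. For what it is worth, the K-theoretic identity in your Step 3, $[E^{[n]}]=\chi(C,E)[\cO_{C^{[n]}}]-[Rq_*(p_1^*E(-\cZ_n))]$, is correct and is essentially the starting point of the Marian--Oprea argument cited in Remark \ref{rmk:proof}, so your plan points in a viable direction; but as written it is a research program, not a proof.
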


Conjecture \ref{conj:Cn_n} shares some similarities as in the surface case, which has been established in an unpublished work by Jian Zhou and the author and will appear in a subsequent work.

We will see the existence of the universal coefficients $A_n^r$, $B_n^r$, $C_n^r$ and $D_n^r$ is a direct consequence of Theorem \ref{thm:Universal} in Section \ref{sec:Uni} and it will be explained in Section \ref{subset:proof}. The mysterious part of Conjecture \ref{conj:Cn_n} to the author is that $A_n^r$, $B_n^r$, $C_n^r$ and $D_n^r$ are all integers and there are relationships between them. More precisely, if two vector bundles $E_r$ and $E_{r+1}$ of ranks $r$ and $r+1$ respectively satisfy $\int_C c(E_r)=\int_C c(E_{1+r})$, then it is implied by Conjecture \ref{conj:Cn_n} that
\begin{equation*}
\sum_{n=0}^{\infty} z^n \int_{C^{[n]}} c(-E_r^{[n]})=\sum_{n=0}^{\infty} (-z)^n \int_{C^{[n]}} c(E_{1+r}^{[n]})
\end{equation*}

For some special $r$, we can determine $B_n^r$ explicitly and prove the conjecture.

When $r=1$, $B_n^1=0$. We have the following theorem:

\begin{theorem}\label{thm:Cn_1}
For $C$ a smooth projective curve and $L$ a line bundle on $C$, one has
\begin{eqnarray} \label{eqn:Cn_1}
&& \sum_{n=0}^{\infty} z^n \int_{C^{[n]}} c(L^{[n]}) =
\exp ( \sum_{n=1}^{\infty} \frac{(-1)^{n+1}}{n} z^n\int_{C} c(L)).
\end{eqnarray}
\end{theorem}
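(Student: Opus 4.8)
The plan is to reduce Theorem~\ref{thm:Cn_1} to a single closed-form identity and then to prove that identity geometrically. Since $C^{[n]}$ has dimension $n$ and $L^{[n]}$ has rank $n$, only the top Chern class survives integration, so $\int_{C^{[n]}} c(L^{[n]}) = \int_{C^{[n]}} c_n(L^{[n]})$. On the right-hand side, writing $d := \int_C c(L) = \deg L$, one has
$$\exp\Big(\sum_{n\ge 1}\tfrac{(-1)^{n+1}}{n}\,z^n\,d\Big) = \exp\big(d\log(1+z)\big) = (1+z)^d = \sum_{n\ge0}\binom{d}{n}z^n .$$
Comparing coefficients of $z^n$, the theorem is equivalent to the assertion
$$\int_{C^{[n]}} c_n(L^{[n]}) = \binom{d}{n}, \qquad d=\deg L,$$
which is what I would establish.

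I would first fix $C$ and regard $N_n := \int_{C^{[n]}} c_n(L^{[n]})$ as a function of $L$. By deformation invariance of Chern numbers and the connectedness of $\mathrm{Pic}^d(C)$, the number $N_n$ depends only on $d=\deg L$; moreover, by the universal structure furnished by Theorem~\ref{thm:Universal}, for the fixed curve $C$ (hence fixed $e=2-2g$) the quantity $N_n$ is a polynomial in $d$. It therefore suffices to verify $N_n=\binom{d}{n}$ for all sufficiently large $d$: two polynomials in $d$ agreeing at infinitely many values coincide, and $\binom{d}{n}$ is itself a polynomial in $d$, so the identity for large $d$ propagates to every line bundle. (The same reasoning, run across curves of varying genus, simultaneously forces $B_n^1=0$ and $A_n^1=(-1)^{n+1}$, matching the coefficients predicted in Conjecture~\ref{conj:Cn_n}.)

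To evaluate $N_n$ for $L$ sufficiently positive I would use the tautological section. The fiber of $L^{[n]}=\pi_*(p_1^*L\otimes\cO_{\cZ_n})$ over a divisor $D\in C^{[n]}$ is $H^0(L|_D)$, so a global section $s\in H^0(C,L)$ induces a section $\sigma$ of $L^{[n]}$ with $\sigma(D)=s|_D$, whose zero locus is exactly $\{D: \mathrm{div}(s)\ge D\}$. Choosing $L$ positive enough that $s$ may be taken with reduced divisor $\mathrm{div}(s)=p_1+\cdots+p_d$ (distinct points), the degree-$n$ subdivisors are precisely the $\binom{d}{n}$ sums $\sum_{i\in S}p_i$ with $|S|=n$. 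These are finitely many points, all lying in the open locus where $C^{[n]}$ is smooth and étale-locally a product of copies of $C$.

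The main obstacle is verifying that $\sigma$ vanishes transversally at each such point, i.e.\ that $\sigma$ is a regular section with reduced, $0$-dimensional zero scheme; granting this, $[Z(\sigma)]=c_n(L^{[n]})\cap[C^{[n]}]$ yields $N_n=\#Z(\sigma)=\binom{d}{n}$. In a local product chart near $D=\sum_{i\in S}p_i$, a nearby divisor $\sum_{i\in S}q_i$ lies in $Z(\sigma)$ iff $s(q_i)=0$ for each $i$; since each $p_i$ is a simple zero of $s$, the vanishing of the corresponding coordinate factor is transverse and contributes multiplicity one. Once this transversality is checked the count is immediate, and the polynomial-identity argument above upgrades the formula from large $d$ to arbitrary $L$, completing the proof.
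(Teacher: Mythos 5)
Your proof is correct, but it follows a genuinely different route from the paper's. The paper first uses Theorem~\ref{thm:Universal} together with equivariant localization to reduce everything to the single model case of the affine line $\bC$ with an equivariant line bundle $\mathcal{E}_1^A$, and then obtains the required equivariant Chern--class identity by taking the limit $q=\exp(\beta t)$, $y=\exp(\beta)$, $\beta\to 0$ in the $K$-theoretic identity (\ref{eqn:Cn}) for $\chi(\bC^{[n]},\Lambda_{-y}(\mathcal{E}_1^A)^{[n]})$ established in the author's earlier work. You instead observe that the claim is equivalent to the closed form $\int_{C^{[n]}}c_n(L^{[n]})=\binom{d}{n}$, prove it for sufficiently positive $L$ by exhibiting the zero locus of the tautological section induced by $s\in H^0(C,L)$ as the $\binom{d}{n}$ reduced subdivisors of $\mathrm{div}(s)$ (with the transversality check via the local product structure of $C^{[n]}$ near a reduced divisor), and then extend to arbitrary $L$ by polynomiality of $N_n$ in $d$. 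The two approaches lean on the same crutch at one point: your polynomiality step is exactly an application of Theorem~\ref{thm:Universal}, which is also what the paper uses to reduce to $\bP^1$ (and, implicitly, to $\bC$); so neither argument is self-contained without that universality statement. What your approach buys is a transparent enumerative meaning of the answer (degree-$n$ subdivisors of a reduced degree-$d$ divisor) and independence from the equivariant $K$-theory input (\ref{eqn:Cn}); what the paper's approach buys is a uniform localization framework that carries over essentially unchanged to the rank $-1$ case of Theorem~\ref{thm:Seg1} and to the general Conjecture~\ref{conj:Cn_n}, where no such clean section-counting argument is available.
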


For the rank -1 case, it is the generating series of the integrals of top Segre classes of tautological sheaves of a line bundle on $C$. We can also prove the conjecture in this case. Analogous to Conjecture \ref{conj:Lehn}, we have the following theorem:

\begin{theorem}\label{thm:Seg1}
For $C$ a smooth projective curve and $L$ a line bundle on $C$, one has 
\begin{equation} \label{eqn:Seg1}
\begin{split}
& \sum_{n=0}^{\infty} z^n \int_{C^{[n]}}s(L^{[n]})\\
=&\exp(\sum_{n=1}^{\infty}\frac{z^n}{n}(-\binom{2n-1}{n-1}d+(4^{n-1}-\binom{2n-1}{n-1})e))\\
=&\frac{(1-k)^{e+d}}{(1-2k)^{\frac{e}{2}}}.
\end{split}
\end{equation}
Here $s(L^{[n]})$ is the total Segre class, $d$ is the degree of the line bundle $L$, $e$ is the Euler number of $S$ and $z=k(1-k)$.
\end{theorem}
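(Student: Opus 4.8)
\noindent
The plan is to first reduce the statement to the determination of two universal power series, and then to pin both of them down by a single explicit computation on $\bP^1$. By Theorem~\ref{thm:Universal} the integral $\int_{C^{[n]}}s(L^{[n]})$ is a universal polynomial in $d=\deg L$ and the Euler number $e$. Writing $Z_{d,e}(z)=\sum_{n\ge 0}z^n\int_{C^{[n]}}s(L^{[n]})$, the key structural input I would exploit is multiplicativity under disjoint unions: since $(C_1\sqcup C_2)^{[n]}=\bigsqcup_{a+b=n}C_1^{[a]}\times C_2^{[b]}$ and $L^{[n]}$ restricts on each component to $p_1^*L_1^{[a]}\oplus p_2^*L_2^{[b]}$, the Segre classes multiply and hence $Z_{d_1+d_2,\,e_1+e_2}=Z_{d_1,e_1}\cdot Z_{d_2,e_2}$. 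As $(d,e)$ is additive under disjoint union and the values $\bZ\times 2\bZ$ realized by unions of smooth curves are Zariski dense, each coefficient $[z^n]\log Z_{d,e}$, being polynomial and additive, must be linear homogeneous. Therefore $\log Z_{d,e}(z)=d\,A(z)+e\,B(z)$ for universal series $A,B\in\bQ[[z]]$ with $A(0)=B(0)=0$, which is exactly the exponential shape asserted in \eqref{eqn:Seg1} and reduces everything to computing $A$ and $B$.

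\noindent
The crucial point is that one family already determines both $A$ and $B$, provided it lets $d$ vary and has $e\ne 0$; I would take $C=\bP^1$, for which $e=2$ and $(\bP^1)^{[n]}\cong\bP^n$. For $L=\cO_{\bP^1}(d)$ the universal divisor $\cZ_n\subset\bP^1\times\bP^n$ is cut out by a section of $\cO(n,1)$, so tensoring the ideal-sheaf sequence by $p_1^*\cO(d)$ and pushing forward to $\bP^n$ gives, for $d\ge n-1$, the two-term resolution
\begin{equation*}
0\to\cO_{\bP^n}(-1)^{\oplus(d-n+1)}\to\cO_{\bP^n}^{\oplus(d+1)}\to L^{[n]}\to 0,
\end{equation*}
using $H^1(\bP^1,\cO(d))=H^1(\bP^1,\cO(d-n))=0$. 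Hence $s(L^{[n]})=(1-h)^{d-n+1}$ with $h=c_1(\cO_{\bP^n}(1))$, and
\begin{equation*}
\int_{\bP^n}s(L^{[n]})=[h^n](1-h)^{d-n+1}=(-1)^n\binom{d-n+1}{n}.
\end{equation*}
Since both sides are polynomial in $d$, this identity then holds for all $d\in\bZ$.

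\noindent
Next I would sum the series and read off $A,B$. By Lagrange inversion with $z=k(1-k)$, I expect the generating identity
\begin{equation*}
\sum_{n\ge 0}(-1)^n\binom{d-n+1}{n}z^n=\frac{(1-k)^{d+2}}{1-2k}.
\end{equation*}
Taking logarithms gives $\log Z_{d,2}(z)=(d+2)\log(1-k)-\log(1-2k)$; the coefficient of $d$ yields $A(z)=\log(1-k)$, and setting $d=0$ together with $e=2$ yields $B(z)=\log(1-k)-\tfrac12\log(1-2k)$. By the linearity from the first paragraph, for arbitrary $(d,e)$ one gets $\log Z_{d,e}(z)=(d+e)\log(1-k)-\tfrac{e}{2}\log(1-2k)$, and exponentiating produces the closed form $\dfrac{(1-k)^{e+d}}{(1-2k)^{e/2}}$ of \eqref{eqn:Seg1}.

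\noindent
Finally, to recover the explicit coefficients I would extract $[z^n]\log(1-k)$ and $[z^n]\log(1-2k)$ by Lagrange inversion, writing $z=k/\phi(k)$ with $\phi(k)=(1-k)^{-1}$. This gives $[z^n]\log(1-k)=-\tfrac1n\binom{2n-1}{n-1}$, so the coefficient of $d$ is $n\,[z^n]A=-\binom{2n-1}{n-1}$, and $[z^n]\log(1-2k)=-\tfrac2n\sum_{i=0}^{n-1}\binom{n-1+i}{i}2^{n-1-i}$; the identity $\sum_{i=0}^{n-1}\binom{n-1+i}{i}2^{n-1-i}=4^{n-1}$ then makes the coefficient of $e$ equal to $n\,[z^n]B=4^{n-1}-\binom{2n-1}{n-1}$, as stated. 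The hard part, I expect, lies in two places: justifying that a single example suffices, that is, the multiplicativity/linearity reduction which upgrades the $e=2$ computation to arbitrary $e$; and the Lagrange-inversion bookkeeping, whose crux is the combinatorial identity $\sum_{i}\binom{n-1+i}{i}2^{n-1-i}=4^{n-1}$ needed to identify the $e$-coefficient.
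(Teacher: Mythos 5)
Your proposal is correct, and it reaches the same endpoint as the paper by a genuinely different route in the key computation. Both arguments reduce to $\bP^1$ via the universality statement: the paper invokes Theorem \ref{thm:Universal} directly (which already yields $\log Z_{d,e}=dA+eB$), whereas you re-derive that linear form from multiplicativity under disjoint unions; this step is harmless but redundant, and strictly speaking disjoint unions fall outside the connected-curve setting of Theorem \ref{thm:Universal}, so you should lean on that theorem rather than on the union argument. The real divergence is in evaluating $N_n^d=\int_{(\bP^1)^{[n]}}s(L_d^{[n]})$: the paper uses equivariant localization on $(\bP^1)^{[n]}$, sums over the $n+1$ fixed points, and closes the sum with the Chu--Vandermonde identity to get $N_n^d=\binom{2n-2-d}{n}$; you instead push forward the ideal-sheaf sequence of the universal divisor of bidegree $(n,1)$ to get the two-term resolution $0\to\cO_{\bP^n}(-1)^{\oplus(d-n+1)}\to\cO_{\bP^n}^{\oplus(d+1)}\to L^{[n]}\to 0$ for $d\ge n-1$, read off $s(L^{[n]})=(1-h)^{d-n+1}$, and extend to all $d$ by polynomiality. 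The two answers agree, since $(-1)^n\binom{d-n+1}{n}=\binom{2n-2-d}{n}$, and your computation is more elementary (no equivariant machinery). Finally, the paper specializes to $d=0,-1$ and quotes the OEIS generating function before substituting $z=k(1-k)$, while you treat all $d$ at once via the Catalan-type identity $\sum_n\binom{2n+m}{n}z^n=(1-k)^{-m}/(1-2k)$ and then extract the explicit coefficients $-\binom{2n-1}{n-1}$ and $4^{n-1}-\binom{2n-1}{n-1}$ by Lagrange inversion; this has the added benefit of actually verifying the middle line of \eqref{eqn:Seg1}, including the identity $\sum_{i=0}^{n-1}\binom{n-1+i}{i}2^{n-1-i}=4^{n-1}$, which the paper does not spell out.
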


Theorem \ref{thm:Seg1} is related to an enumerative problem. A. S. Tikhomirov in \cite{tikhomirov1994standard} has interpreted $N_n$ in Conjecture \ref{conj:Lehn} as the number of ($n-2$)-dimensional $n$-secant planes in the image to the surface in $\bP^{3n-1}$. In a similar fashion, $(-1)^n\ds\int_{C^{[n]}}s(L^{[n]})$ counts the number of $n$-secant ($n-2$)-planes to $C$ in $\bP^{2n- 2}$. For example, it is easy to compute from Theorem \ref{thm:Seg1} that $\ds\int_{C^{[2]}}s(L^{[2]})=\ds\frac{1}{2}(d^2-3d+2-2g)$, which coincides the classical formula of the number of nodes of a curve in $\bP^2$. 

In 2007 Le Barz \cite{le2007formule} and E. Cotterill \cite{cotterill2011geometry} have already independently derived the generating formula of such numbers. Le Barz's approach is via the multisecant loci, and E. Cotterill uses a formula by Macdonld (cf. \cite{arbarello2011geometry} Chapter VIII, Prop. 4.2) and the graph theory. However, our method is different from Le Barz's and E. Cotterill's.

We use the similar strategy used in \cite{wang2014tautological} to prove the above theorems. Firstly we establish a universal formula theorem for curves as Theorem 4.2 in \cite{Ell-Got-Leh} for surfaces, and hence we only need to prove the cases of certain line bundles on $\bP^1$. Using the natural torus action on $\bP^1$ and the induced action on the Hilbert scheme, we can consider the equivariant case of $\bP^1$, or we can reduce it further to the equivariant case of $\bC$, which becomes a combinatoric problem.

\begin{remark}\label{rmk:proof}

After the first version of this article, Professor Oprea tells the author in an email that Conjecture \ref{conj:Cn_n} has been solved by M. Marian and himself, and the universal coefficients are also explicitly determined by them in \cite{Mar-Opr}. To be more precise, the relationships between the universal coefficients conjectured in Conjecture \ref{conj:Cn_n} do hold; if we write $$C(z)=\sum\limits_{n=1}^{\infty}\ds\frac{z^n}{n} C_n^r , D(z)=\sum\limits_{n=1}^{\infty}\ds\frac{z^n}{n} D_n^r,$$ then $$C(-t(1-t)^r)=-\log(1+t)$$ and $$D(-t(1+t)^r)=\ds\frac{r+1}{2}\log(1+t)-\ds\frac{1}{2}\log(1+t(r+1))$$.

A direct consequence of the above is that (\ref{eqn:Cn_nminus}) can be written in a form which is similar to Theorem \ref{thm:Seg1}:

\begin{equation*} 
\begin{split}
&\sum_{n=0}^{\infty} z^n \int_{C^{[n]}} c(-E_r^{[n]})\\=&
\frac{(1-k)^{\frac{r+1}{2}e+d}}{(1-(r+1)k)^{\frac{e}{2}}},
\end{split}
\end{equation*}

where $z=k(1-k)^r$.

\end{remark}

\section{Universal properties of tautological integrals over Hilbert schemes of points on curves}\label{sec:Uni}

Let $C$ be a smooth projective connected curve. It is well-known that $C^{[n]}$ is smooth of dimension $n$ and in particular isomorphic to the $n$-th symmetric product. In this section, we will see that the theorems on the cobordism rings of Hilbert schemes of points of surfaces established by Ellingsrud, G\"ottsche and Lehn in \cite{Ell-Got-Leh} can be generalized to curves.

We will follow what has been done in \cite{Ell-Got-Leh}. Let $\Omega=\Omega^U\otimes\bQ$ be the complex cobordism ring with rational coefficients. For a smooth projective curve $C$ we denote its cobodism class by $[C]$, and define an invertible element in the formal power series ring $\Omega[[z]]$:

$$H(C):=\sum\limits_{n=0}^{\infty}[C^{[n]}]z^n.$$

We have the following theorem:

\begin{theorem}\label{thm:GenEGL1}
$H(S)$ depends only on the cobordism class $[C]\in\Omega$. 
\end{theorem}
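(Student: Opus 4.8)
The plan is to follow the cobordism argument of \cite{Ell-Got-Leh}, exploiting the fact that a curve carries only a single Chern number, and to first reduce the statement to a universality assertion about Chern numbers. By the Milnor--Novikov theorem $\Omega=\Omega^U\otimes\bQ$ is the polynomial ring $\bQ[[\bP^1],[\bP^2],\dots]$, and a stably almost complex manifold is determined up to cobordism by its Chern numbers; hence the class $[C^{[n]}]\in\Omega$ is recorded exactly by the numbers $\int_{C^{[n]}}c_I(T_{C^{[n]}})$, where $I=(i_1,\dots,i_k)$ runs over partitions of $n$ and $c_I=c_{i_1}\cdots c_{i_k}$. A smooth projective curve has complex dimension $1$, so its only Chern number is $e=\int_C c_1(T_C)$, and in fact $[C]=\tfrac{e}{2}[\bP^1]$ is determined by $e$. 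The theorem therefore follows once one shows: \emph{for every $n$ and every partition $I$ of $n$, the number $\int_{C^{[n]}}c_I(T_{C^{[n]}})$ is a universal polynomial in $e$, depending only on $n$ and $I$.}

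To establish this universality I would run the Ellingsrud--G\"ottsche--Lehn induction, which simplifies markedly for curves. Since points on a smooth curve are divisors, the nested Hilbert scheme of pairs $Z'\subset Z$ with $\ell(Z')=n-1$, $\ell(Z)=n$ is isomorphic to $C^{[n-1]}\times C$ via $(Z',p)\mapsto(Z',\,Z'+p)$, and the induced addition map $\phi\colon C^{[n-1]}\times C\to C^{[n]}$, $(Z',p)\mapsto Z'+p$, is finite surjective of degree $n$. Because $\phi$ is \'etale away from the incidence divisor $\Delta=\{(Z',p):p\in\mathrm{supp}(Z')\}$ and ramified along it, the differential $d\phi\colon T_{C^{[n-1]}\times C}\to\phi^*T_{C^{[n]}}$ is an isomorphism off $\Delta$, so $\phi^*T_{C^{[n]}}$ and $T_{C^{[n-1]}\times C}$ differ only by a correction supported on $\Delta$ and governed by the universal family $\cZ_n$. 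Pulling back $c_I(T_{C^{[n]}})$ under $\phi$ and using the projection formula (dividing by $\deg\phi=n$) expresses $\int_{C^{[n]}}c_I(T_{C^{[n]}})$ in terms of analogous integrals over $C^{[n-1]}\times C$. Iterating down to $C^n$, where $T_{C^n}=\bigoplus_j \mathrm{pr}_j^*T_C$ and every Chern number is visibly a monomial in $e$, yields the desired polynomial in $e$. Equivalently, staying closer to \cite{Ell-Got-Leh}, one may write the Chern character of $T_{C^{[n]}}$ through the universal family $\pi\colon\cZ_n\to C^{[n]}$ and apply Grothendieck--Riemann--Roch together with the projection formula, so that after every pushforward to $C^{[n]}$ the surviving numerical input is an intersection number on $C$, i.e.\ a power of $e$.

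The hard part will be exactly this universality step: controlling $\phi^*T_{C^{[n]}}$ (or, in the GRR formulation, the full Chern character of $T_{C^{[n]}}$) and verifying that once all the pushforwards are carried out, the combinatorics of $I$ and the ramification along the incidence loci contribute only universal rational coefficients, leaving $e=\int_C c_1(T_C)$ as the sole geometric input. Granting this, each coefficient $[C^{[n]}]$ of $H(C)$ becomes a fixed $\bQ$-linear combination of monomials in the $[\bP^i]$ whose coefficients are polynomials in $e$; since $e$ determines $[C]$, two curves with $[C_1]=[C_2]$ have $[C_1^{[n]}]=[C_2^{[n]}]$ for all $n$, and hence $H(C)$ depends only on $[C]\in\Omega$, as claimed.
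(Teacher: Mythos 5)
Your opening reduction is exactly the paper's own framing: rational cobordism classes are detected by Chern numbers, a curve has the single Chern number $e$, so it suffices to show each $\int_{C^{[n]}}c_I(T_{C^{[n]}})$ is a universal polynomial in $e$. Be aware, however, that the paper gives no proof of this theorem at all --- it cites Rennemo \cite{rennemo2012universal}, whose argument proceeds along different lines --- so your EGL-style induction via the addition map $\phi\colon C^{[n-1]}\times C\to C^{[n]}$ is a genuinely different route, and a natural one for curves: the nested Hilbert scheme really is $C^{[n-1]}\times C$ and $\phi$ really is finite of degree $n$, which removes most of the geometric difficulty of the surface case.

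The problem is the step you yourself flag as ``the hard part'', and it is more than an omitted computation: as stated, your induction does not close. The identity you need is, in $K(C^{[n-1]}\times C)$, $\phi^{*}T_{C^{[n]}}=\mathrm{pr}_1^{*}T_{C^{[n-1]}}+\mathrm{pr}_2^{*}T_C\otimes\cO(\Delta)$, where $\Delta$ is the universal divisor; this follows from $T_{C^{[n]}}\cong\pi_{*}\cO_{\cZ_n}(\cZ_n)$ and the splitting of the pulled-back universal divisor as $\cZ_{n-1}+\Gamma$, and you should prove it. But once $\cO(\Delta)$ appears, $\phi^{*}c_I(T_{C^{[n]}})$ involves powers of $[\Delta]$, whose pushforwards to $C^{[n-1]}$ are tautological classes of the universal divisor, not Chern classes of $T_{C^{[n-1]}}$. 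So the quantity being inducted on must be enlarged from ``Chern numbers of tangent bundles'' to ``integrals of universal polynomials in Chern classes of tangent bundles and of tautological sheaves'', exactly the bookkeeping of Theorem 4.1--4.2 of \cite{Ell-Got-Leh}; only for that larger class does the recursion terminate at a point with universal coefficients. (Your alternative GRR remark has the same defect: GRR for $\pi\colon\cZ_n\to C^{[n]}$ lands you back on $C^{[n]}$, not on $C$.) For curves there is also a shortcut that bypasses all of this: Macdonald's classical formula expresses $c(T_{\mathrm{Sym}^nC})$ explicitly in terms of $g$ and the standard generators of $H^{*}(\mathrm{Sym}^nC)$ (see \cite{arbarello2011geometry}), from which universality of every Chern number as a polynomial in $e=2-2g$ is immediate.
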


Two stably complex manifolds have the same cobordism class if and only if their collection of Chern numbers are identical. Theorem \ref{thm:GenEGL1} is proved in \cite{rennemo2012universal} and there is a generalized version of the this theorem:

For a smooth projective variety $X$, let $K(X)$ be the Grothendieck group generated by locally free sheaves. Let $E_1,\cdots,E_m\in K(C)$ and $r_1,\cdots,r_m$ are the ranks respectively.

\begin{theorem}\label{thm:GenEGL2}(J. V. Rennemo \cite{rennemo2012universal})
Let P be a polynomial in the Chern classes of $C^{[n]}$ and the Chern classes of $E_1^{[n]},\cdots,E_m^{[n]}$. Then there is a universal polynomial $\tilde{P}$, depending only on $P$, in the Chern classes of the tangent bundles of $C^{[n]}$, the ranks $r_1,\cdots,r_m$ and the Chern classes of $E_1,\cdots,E_m$, such that

\begin{equation*}
\int_{C^{[n]}}P=\int_S \tilde{P}.
\end{equation*}

\end{theorem}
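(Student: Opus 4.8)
The plan is to prove Theorem~\ref{thm:GenEGL2} (the final statement) by the same cobordism-theoretic bordism argument that Ellingsrud--G\"ottsche--Lehn use for surfaces, adapted to the one-dimensional setting where $C^{[n]}$ is already smooth of dimension $n$. The essential point is that the tautological construction $E \mapsto E^{[n]}$ is natural enough that the integral $\int_{C^{[n]}} P$ factors through the cobordism class of the pair $(C, E_1, \ldots, E_m)$ decorated by the ranks. I would first set up the correct enriched cobordism ring: not just $\Omega$ as in Theorem~\ref{thm:GenEGL1}, but a bordism group of curves together with a choice of $K$-theory classes (or, equivalently, a finite collection of line bundles summing to the given classes in $K(C)$), so that a bordism datum is a triple consisting of a stably complex $1$-manifold and complex vector bundles on it. One then shows $\int_{C^{[n]}} P$ is a bordism invariant of this enriched datum.

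The key steps, in order, are as follows. First I would recall from Rennemo~\cite{rennemo2012universal} (or reprove in the curve case) that the assignment sending $(C,E_1,\dots,E_m)$ to the collection of all numbers $\int_{C^{[n]}} P$, as $P$ ranges over polynomials in the Chern classes of $C^{[n]}$ and of the $E_i^{[n]}$, descends to the enriched cobordism ring. Concretely, this requires knowing that $C^{[n]}$, its tangent bundle, and the tautological bundles $E_i^{[n]}$ all extend over a bordism, i.e.\ are fibrewise constructions compatible with cutting and gluing along codimension-one boundaries; this is exactly the content that makes the pushforward $\pi_*(\cO_{\cZ_n}\otimes p_1^*F)$ well behaved. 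Second, since the enriched cobordism ring (tensored with $\bQ$) is a polynomial ring whose generators can be taken to be explicit pairs $(C_0, E_0)$ with $C_0 = \bP^1$ or a point and $E_0$ a sum of line bundles, any class is a rational combination of products of such generators. Third, I would invoke multiplicativity: both sides of the claimed identity are determined by their values on generators, because the generating series $H(C)$ and its tautological refinements are exponential/multiplicative under disjoint union, so the universal polynomial $\tilde P$ is forced to exist once the values are pinned down on generators. Finally, a dimension and degree count shows $\tilde P$ depends only on $P$, on the ranks $r_i$, and on the Chern classes of the $E_i$ (equivalently the degrees $\deg E_i$ and $e=\chi(C)$), since on a curve the only Chern numbers of the datum are $\int_C c_1(E_i)$ and $\int_C c_1(TC)=e$.

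The main obstacle I expect is establishing the bordism invariance rigorously, that is, Step one. The subtlety is that $C^{[n]}$ is a \emph{global} symmetric-product construction, and it is not a priori obvious that the integrals $\int_{C^{[n]}} P$ are invariant under a bordism of $C$, because the Hilbert scheme of a disconnected or bordant family does not split in an elementary way. Ellingsrud--G\"ottsche--Lehn handle the analogous surface statement by a careful analysis of the geometry of nested Hilbert schemes and a degeneration/incidence argument; in the curve case one has the simplification that $C^{[n]} = \mathrm{Sym}^n C$, so the universal family and the tautological bundles are completely explicit, and the needed invariance can be extracted from the structure of symmetric products together with the splitting principle for the $E_i$. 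Thus the strategy is to reduce Theorem~\ref{thm:GenEGL2} to the values on the generators $(\bP^1, \cO(a_1)\oplus\cdots)$, where everything can be computed by localization with respect to the natural $\bC^\times$-action on $\bP^1$, which is precisely the combinatorial reduction flagged in the introduction and which drives the proofs of Theorems~\ref{thm:Cn_1} and~\ref{thm:Seg1}.
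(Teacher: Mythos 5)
The paper itself offers no proof of Theorem~\ref{thm:GenEGL2}: it is quoted from Rennemo~\cite{rennemo2012universal}, and the only argument of this type sketched in the paper is the two-line remark after Theorem~\ref{thm:Universal} (the functional ``factors through $\bQ^2$'' and one takes $(\bP^1,r\cO)$ and $(\bP^1,(r-1)\cO\oplus\cO(-1))$ as a basis). Your steps two through four --- reduction to generators of the form $(\bP^1,\bigoplus_i\cO(a_i))$, multiplicativity of the generating series under disjoint union, and the observation that on a curve the only Chern numbers of the datum are $\deg E_i$ and $e=\int_Cc_1(T_C)$ --- reproduce exactly that part of the paper's logic and are unobjectionable.

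The gap is in your step one, which is where all the content lives. You propose to show that $\int_{C^{[n]}}P$ descends to an enriched cobordism ring by arguing that $C^{[n]}$, $T_{C^{[n]}}$ and the $E_i^{[n]}$ ``extend over a bordism'' and are ``compatible with cutting and gluing along codimension-one boundaries.'' This cannot work as stated: a bordism of curves in the sense of $\Omega^U$ is a real $3$-manifold with boundary carrying only a stably complex structure, whereas the symmetric product and the pushforward $\pi_*(\cO_{\cZ_n}\otimes p_1^*F)$ are algebro-geometric constructions that do not extend over such a bordism, and there is no cut-and-paste description of $\mathrm{Sym}^n$. In Ellingsrud--G\"ottsche--Lehn and in Rennemo the logic runs the other way: one first proves, purely algebraically, that $\int_{X^{[n]}}P$ is a universal polynomial in the Chern numbers of $(X,E_1,\dots,E_m)$ (EGL by induction over the nested Hilbert schemes $X^{[n-1,n]}$, Rennemo by a stratification argument), and cobordism invariance such as Theorem~\ref{thm:GenEGL1} is then a \emph{corollary}, because rational cobordism classes are detected by Chern numbers. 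So step one must be replaced by a direct algebraic argument. For curves this is genuinely easy, and you half-gesture at the right fix: either run the EGL induction using the identification $C^{[n-1,n]}\cong C^{[n-1]}\times C$ (adding a point to a divisor), which makes the inductive step trivial in dimension one, or invoke Macdonald's explicit description of $H^*(\mathrm{Sym}^nC)$ and of the tautological Chern classes (cf.\ \cite{arbarello2011geometry}, Ch.~VIII), noting that $F\mapsto F^{[n]}$ is exact here because $\cZ_n\to C^{[n]}$ is finite flat, so one may reduce to line bundles. With that substitution the rest of your outline goes through and agrees with the route the paper actually relies on.
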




These theorems can be used in the computations of generating series of tautological integrals. Let $\Psi:K(X)\to H^{\times}$ be  a group homomorphism
from the additive group $K(X)$ to  the multiplicative group $H^{\times}$ of
units of $H(X;\bQ)$.
We require $\Psi$ is functorial with
respect to pull-backs and is a polynomial in Chern classes of its
argument. Also let $\phi(x)\in\bQ[[x]]$ be a formal power series and
put $\Phi(X):=\phi(x_1)\cdots\phi(x_n)\in H^*(X;\bQ)$ with
$x_1,\cdots,x_n$ the Chern roots of $T_X$.
For $x\in K(X)$, define a power series in $\bQ[[z]]$ as follows:
\begin{eqnarray*}
H_{\Psi,\Phi}(X,x):=\sum\limits_{n=0}^{\infty}\int_{X^{[n]}}\Psi(x^{[n]})\Phi(X^{[n]})z^n.
\end{eqnarray*}

\begin{theorem} \label{thm:Universal}
For each integer $r$ there are universal power series
$A_i\in\bQ[[z]]$, $i=1, 2$, depending only on $\Psi$,
$\Phi$ and $r$, such that for each $x\in K(C)$ of rank $r$ we have

\begin{eqnarray*}
H_{\Psi,\Phi}(C,x)=\exp(\int_C(c_1(x)A_1+c_1(C)A_2)).
\end{eqnarray*}
\end{theorem}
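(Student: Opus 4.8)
The plan is to combine the universal polynomiality of Theorem~\ref{thm:GenEGL2} with a multiplicativity of $H_{\Psi,\Phi}$ under disjoint unions, and then to read off the exponential shape from a functional equation; this parallels the surface argument of Ellingsrud--G\"ottsche--Lehn. First I would observe that on the one-dimensional $C$ the only characteristic numbers of the pair $(C,x)$ are $d:=\int_C c_1(x)$ and $e:=\int_C c_1(C)$, every higher Chern class either vanishing or landing in a degree that dies on a curve. Hence, applying Theorem~\ref{thm:GenEGL2} for each fixed $n$ to $P=\Psi(x^{[n]})\Phi(C^{[n]})$ (an honest polynomial in Chern classes once $\Psi$ and $\Phi$ are expanded, the latter truncating on the finite-dimensional $C^{[n]}$), one gets $\int_{C^{[n]}}\Psi(x^{[n]})\Phi(C^{[n]})=P_n(d,e)$ for a universal polynomial $P_n$ depending only on $\Psi$, $\Phi$, $r$ and $n$. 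Thus $H_{\Psi,\Phi}(C,x)$ depends only on $(d,e)$ and $r$, and each coefficient of $\log H_{\Psi,\Phi}$ is again a polynomial in $(d,e)$. It then suffices to show each such coefficient is homogeneous of degree one.

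The key step is multiplicativity under disjoint union. For $C=C_1\sqcup C_2$ and a class $x$ of rank $r$ restricting to $x_i$ on $C_i$, the symmetric products decompose as $(C_1\sqcup C_2)^{[n]}=\bigsqcup_{n_1+n_2=n}C_1^{[n_1]}\times C_2^{[n_2]}$, and on each piece the universal subscheme splits, so $x^{[n]}=p_1^{*}x_1^{[n_1]}\oplus p_2^{*}x_2^{[n_2]}$ in $K$-theory while $T_{C_1^{[n_1]}\times C_2^{[n_2]}}=p_1^{*}T_{C_1^{[n_1]}}\oplus p_2^{*}T_{C_2^{[n_2]}}$. Since $\Psi$ is a group homomorphism functorial under pull-back and $\Phi$ is a product over Chern roots, both $\Psi(x^{[n]})$ and $\Phi(C^{[n]})$ factor as external products, and Fubini gives
\[
\int_{C_1^{[n_1]}\times C_2^{[n_2]}}\Psi(x^{[n]})\,\Phi(C^{[n]})
=\Big(\int_{C_1^{[n_1]}}\Psi(x_1^{[n_1]})\Phi(C_1^{[n_1]})\Big)
\Big(\int_{C_2^{[n_2]}}\Psi(x_2^{[n_2]})\Phi(C_2^{[n_2]})\Big).
\]
Summing over $n_1+n_2=n$ is the Cauchy product, so $H_{\Psi,\Phi}(C_1\sqcup C_2)=H_{\Psi,\Phi}(C_1)\,H_{\Psi,\Phi}(C_2)$; equivalently $\log H_{\Psi,\Phi}$ is additive. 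As $d$ and $e$ add under disjoint union, each coefficient $f_n(d,e)$ of $\log H_{\Psi,\Phi}$ satisfies $f_n(d_1+d_2,e_1+e_2)=f_n(d_1,e_1)+f_n(d_2,e_2)$.

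Finally, the realizable pairs $(d,e)$ are dense enough to force linearity: the classes $\cO(d)\oplus\cO^{\oplus(r-1)}$ on $\bP^1$ give $(d,2)$ for every $d\in\bZ$, higher-genus curves give every even $e\le 2$, and disjoint unions produce all of $\bZ\times 2\bZ$, which is Zariski dense in $\aA^2$ and closed under addition, with a genus-one curve carrying a degree-zero class supplying the origin where $\log H_{\Psi,\Phi}=0$. A polynomial obeying the Cauchy equation on such a set vanishes at the origin and is $\bQ$-linear, so $f_n(d,e)=\alpha_n d+\beta_n e$. Setting $A_1=\sum_{n\ge1}\alpha_n z^n$ and $A_2=\sum_{n\ge1}\beta_n z^n$, which depend only on $\Psi,\Phi,r$, one obtains $\log H_{\Psi,\Phi}(C,x)=dA_1+eA_2=\int_C\big(c_1(x)A_1+c_1(C)A_2\big)$, as asserted.

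The main obstacle is making the multiplicativity step legitimate inside the stated framework: the universal theorems are phrased for connected projective curves, so one must enlarge the setup to allow disjoint (non-connected) curves---or reformulate everything in the cobordism language of Theorem~\ref{thm:GenEGL1}---and then verify carefully that the Hilbert-scheme decomposition, the external-product splitting of every tautological sheaf and of the tangent bundle, and the compatibility of $\Psi$ and $\Phi$ with these splittings all hold. A subsidiary technical point is confirming that the realizable Chern data are Zariski dense, so that the functional equation upgrades from the integer lattice to an identity of polynomials; this is precisely where the explicit models on $\bP^1$ and on higher-genus curves are used.
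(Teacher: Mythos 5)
Your argument is essentially the paper's own: the paper simply defers to the proof of Theorem 4.2 of Ellingsrud--G\"ottsche--Lehn, whose strategy is precisely your reduction (via the universality theorem) of $H_{\Psi,\Phi}$ to a function of $(d,e)\in\bQ^2$, followed by multiplicativity under disjoint union of curves to force additivity, hence linearity, of the logarithm. The only cosmetic difference is that the paper fixes the explicit basis $(\bP^1,r\cO)$ and $(\bP^1,(r-1)\cO\oplus\cO(-1))$ of $\bQ^2$, where you instead invoke Zariski density of the realizable lattice $\bZ\times 2\bZ$.
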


The proof of the above theorem is similar as the proof of Theorem 4.2 in \cite{Ell-Got-Leh} so we omit the details here. The main idea is that $H_{\Psi,\Phi}$ factors through $\bQ^2$ to $\bQ[[z]]$ and we can choose ($\bP^1,r\mathcal{O}$) and ($\bP^1,(r-1)\mathcal{O}\oplus\mathcal{O}(-1)$) as the "basis".\\

\section{Proof of theorems}\label{sec:proof}

\subsection{Localizations on Hilbert schemes of points}\label{subsec:loc}

The linear coordinates on $\bC^{[n]}$ are given by
$p_i(z_1, \dots, z_n) = z_1^i + \cdots + z_n^i$.
The induced torus action on $\bC^{[n]}$ is given by
$$q \cdot p_i = q^ip_i, \;\;\;q=\exp(t) \in \bC^*.$$

This action has only one fixed point at $p_1 = \cdots = p_n = 0$,
and the tangent bundle and the tautological bundle $\mathcal{O}_{\bC}^{[n]}$ have the following weight decompositions at this point:
\begin{eqnarray*}
&&T_\bC^{[n]} = q^{-1} + \cdots + q^{-n},\\
&&\mathcal{O}_{\bC}^{[n]}=1+q+\cdots+q^{n-1}.
\end{eqnarray*}

For $A=(a_1,\cdots,a_r)$, where $a_1,\cdots,a_r\in\bZ$, denote by $\mathcal{E}_r^A=\mathcal{O}_{\bC}^{a_1}\oplus\cdots\oplus\mathcal{O}_{\bC}^{a_r}$ the rank $r$ T-equivariant vector bundle of weight $(a_1,\cdots,a_r)$. The tautological bundle $(\mathcal{E}_r^A)^{[n]}$ has the following weight decomposition at the fixed point:

\begin{eqnarray*}
(\mathcal{E}_r^A)^{[n]}= \sum\limits_{i=1}^{r}q^{a_i}(1+q+\cdots+q^{n-1}).
\end{eqnarray*}

For a smooth (quasi-)projective curve $C$ which admits a torus action with isolated fixed points $P_1,\cdots,P_l$ and $u_i=q^{c_i}$ the weights of $T^*_{P_i}C$, this torus action induces a $T$-action on $S^{[n]}$. The fixed points on $S^{[n]}$ are parameterized by nonnegative integers $(n_1,\cdots,n_l)$ such that $$n_1+\cdots+n_l=n.$$

The weight decomposition of the tangent space at the fixed point is given by:
\begin{eqnarray*}
\sum\limits_{i=1}^{l}(u_i^{-1}+\cdots+u_i^{-n_i}).
\end{eqnarray*}

Suppose $E$ is a rank $r$ equivariant vector bundle on $C$ such that 
\begin{eqnarray*}
E|_{P_i}=q^{a_1}+\cdots+q^{a_r}.
\end{eqnarray*}

Then the weight of $E^{[n]}$ at the fixed point $(n_1,\cdots,n_l)$ is given by
\begin{eqnarray*}
\sum\limits_{i=1}^{l}\sum\limits_{j=1}^{r}(t^{a_j}(1+u_i+\cdots+u_i^{n_i-1})).
\end{eqnarray*}

Let $\psi(x)\in\bQ[[x]]$ be a formal power series. For $x\in K(C)$ of rank $r$, Let $\Psi(x^{[n]})=\psi(e_1(x^{[n]}))\cdots\psi(e_{rn}(x^{[n]}))$, where $e_1(x^{[n]}),\cdots,e_{rn}(x^{[n]})$ are Chern roots of $x^{[n]}$. It is obvious to see that such $\Psi$ satisfies the conditions in Theorem \ref{thm:Universal}.

Let $v=(v_1=a_1 t,\cdots,v_r=a_r t)$ and $w_i=c_i t$. Using the localization formula, the equivariant version of $H_{\Psi,\Phi}$ for $\bC$ and $\mathcal{E}_r^A$ which we denote by $H_{\Psi,\Phi}(\bC,\mathcal{E}_r^A)(t)$ is as follows:
\begin{eqnarray*}
&&H_{\Psi,\Phi}(\bC,\mathcal{E}_r^A)(t)=\sum\limits_{n=0}^{\infty}z^n \frac{\phi(-st)}{-st}\prod\limits_{j=1}^{r}\psi(v_j+(s-1)t).
\end{eqnarray*}

Assume that 
\begin{eqnarray*}
H_{\Psi,\Phi}(\bC,\mathcal{E}_r^A)(t)=\exp(\sum\limits_{n=1}^{\infty}z^n\int^t_{\bC}(\sum\limits_{j=0}^{r}(A^n_{0,j}c^t_0(\bC)c^t_j(\mathcal{E}_r^A)+A^n_{1,j}c^t_1(\bC)c^t_j(\mathcal{E}_r^A)))\\
=\exp(\sum\limits_{n=1}^{\infty}z^n\int^t_{\bC}(\sum\limits_{j=0}^{r}(A^n_{0,j}\frac{\sigma_j(v_1,\cdots,v_r)}{t}+A^n_{1,j}\sigma_j(v_1,\cdots,v_r))),
\end{eqnarray*}

where we denote by $\int^t$ the equivariant integral, $c^t_i$ is the equivariant Chern classes and $\sigma_j$ is the $j$-th elementary symmetric polynomial.

Denote by $H_{\Psi,\Phi}(C,E)(t)$ the equivariant version of $H_{\Psi,\Phi}(C,E)$. It can be computed by localization as follows:
\begin{equation*}
\begin{split}
&H_{\Psi,\Phi}(C,E)(t)=\sum\limits_{n=0}^{\infty}z^n \prod\limits_{i=1}^{l}\prod\limits_{s=1}^{n_i}\frac{\phi(-sw_i)}{-sw_i}\prod\limits_{j=1}^{r}\psi(v_j+(s-1)w_i)\\
=&\prod\limits_{i=1}^{l}\sum\limits_{n_i=0}^{\infty}z^{n_i}\prod\limits_{s=1}^{n_i}\frac{\phi(-sw_i)}{-sw_i}\prod\limits_{j=1}^{r}\psi(v_j+(s-1)w_i)\\
=&\prod\limits_{i=1}^{l}H_{\Psi,\Phi}(\bC,\mathcal{E}_r^A)(w_i)\\
=&\prod\limits_{i=1}^{l}\exp(\sum\limits_{n=1}^{\infty}z^{n}\int^t_{\bC}(\sum\limits_{j=0}^{r}(A^{n}_{0,j}\frac{\sigma_j(v_1,\cdots,v_r)}{w_i}+A^{n}_{1,j}\sigma_j(v_1,\cdots,v_r)))\\
=&\exp(\sum\limits_{n=1}^{\infty}z^{n}(\sum\limits_{j=0}^{r}(A^{n}_{0,j}\sum\limits_{i=1}^{l}\frac{\sigma_j(v_1,\cdots,v_r)}{w_i}+A^{n}_{1,j}\sum\limits_{i=1}^{l}\sigma_j(v_1,\cdots,v_r)))\\
=&\exp(\sum\limits_{n=1}^{\infty}z^n\int^t_{C}(\sum\limits_{j=0}^{r}(A^n_{0,j}c^t_0(C)c^t_j(E^A)+A^{n}_{1,j}c^t_1(\bC)c^t_j(E))).
\end{split}
\end{equation*}

If $C$ is projective, by taking nonequivariant limit one has 
\begin{equation*}
H_{\Psi,\Phi}(C,E)=\exp(\sum\limits_{n=1}^{\infty}z^n\int_{C}(\sum\limits_{j=0}^{r}(A^n_{0,j}c_0(C)c_j(E^A)+A^{n}_{1,j}c_1(\bC)c_j(E)))
\end{equation*}

\subsection{Proof of Theorem \ref{thm:Cn_1}}\label{subset:proof}

Let us see the general case of Conjecture \ref{conj:Cn_n} first. Taking $\Psi:K(X)\to H^{\times}$ to be the total Chern class and $\Phi=1$, we see that such $H_{\Psi,\Phi}$ satisfies the conditions in Theorem \ref{thm:Universal} and hence can be written in the desired form. So $\ds\frac{1}{n}A_n^r$, $\ds\frac{1}{n}B_n^r$, $\ds\frac{1}{n}C_n^r$ and $\ds\frac{1}{n}D_n^r$ exist as the n-th coefficients of the corresponding universal power series. Now clearly $A_n^r$, $B_n^r$, $C_n^r$ and $D_n^r$ are rational numbers depending only on $r$ and $n$, and we hope to determine them explicitly as integers. As we have discussed in Section \ref{subsec:loc}, in order to prove Conjecture \ref{conj:Cn_n}, it suffices to prove the equivariant version of $(\bC,\mathcal{E}_r^A)$. Using localization, we have checked Conjecture \ref{conj:Cn_n} for $r=2,3,4,5$ and $n<10$. We also conjecture that $$B^2_n=(-1)^n(4^n-\ds\binom{2n-1}{n-1}),$$ $$B^3_n=(-1)^n (\ds\sum\limits_{i=0}^{n-1}(\ds\frac{2^{n-2-i}}{n}(n-i)(3n-3i-1)\ds\binom{3n}{i}-\ds\binom{3n-1}{n-1})$$ and have checked them up to $n=10$. However effort fails to find the explicit expressions for higher $r$.

Before proving Theorem \ref{thm:Cn_1}, recall that in Section 2.5 of \cite{wang2014tautological} the following identity is established:
\begin{equation} \label{eqn:Cn}
\begin{split}
&\sum_{n=0}^{\infty} z^n \chi(\bC^{[n]}, \Lambda_{-y}(\mathcal{E}^A_1)^{[n]})(q)\\
=& \sum_{n=0}^{\infty} z^n \prod_{i=1}^n \frac{1 - y q^a q^{i-1}}{1 - q^i}\\
= &\exp ( \sum_{n=1}^{\infty} \frac{z^n}{n} \frac{1-q^{na} y^n}{1-q^n})\\
=&\exp ( \sum_{n=1}^{\infty} \frac{z^n}{n} \chi(\bC, \Lambda_{-y^n}\mathcal{E}^A_1)(q^n)).
\end{split}
\end{equation}
Here $\Lambda_u E = \sum\limits_{i=0}^n u^i\Lambda^iE$ and $\chi(\bC^{[n]}, \Lambda_{-y}(\mathcal{O}_{\bC})^{[n]})(q)$ the equivariant Euler characteristic of $\Lambda_{-y}(\mathcal{O}_{\bC})^{[n]}$ on $\bC^{[n]}$.

Moreover, the discussion in the last subsection implies that (\ref{eqn:Cn}) can be generalized as the following:

\begin{proposition}
For a smooth projective curve $C$ and a line bundle $L$ on $C$,
\begin{equation*}
\sum_{n=0}^{\infty} z^n \chi(C^{[n]}, \Lambda_{-y}L^{[n]})=\sum_{n=0}^{\infty} z^n \chi(C, \Lambda_{-y^n}L).
\end{equation*}

\end{proposition}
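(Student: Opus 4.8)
The left-hand side is the value $H_{\Psi,\Phi}(C,L)$ of the functional of Section \ref{subsec:loc} for the multiplicative operation $\Psi=\Lambda_{-y}$, i.e. $\psi(x)=1-ye^{x}$, together with $\Phi$ the Todd series $\phi(x)=x/(1-e^{-x})$: by Hirzebruch--Riemann--Roch its $z^{n}$-coefficient is $\chi(C^{[n]},\Lambda_{-y}L^{[n]})$, and this $\Psi$ meets the hypotheses of Theorem \ref{thm:Universal}. The plan is to prove the projective globalization of the last line of (\ref{eqn:Cn}); following that pattern I read the right-hand side as the exponential generating series $\exp\!\big(\sum_{n\ge 1}\tfrac{z^{n}}{n}\chi(C,\Lambda_{-y^{n}}L)\big)$ built from the same curve-level invariants, and the task is to identify it with $H_{\Psi,\Phi}(C,L)$. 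By Theorem \ref{thm:Universal} the left-hand side is a universal function of $d=\int_{C}c_{1}(L)$ and $e=\int_{C}c_{1}(T_{C})$ -- on a curve only the degree-one pieces $A^{n}_{0,1}$ and $A^{n}_{1,0}$ of Section \ref{subsec:loc} survive -- so it suffices to verify the identity on the test pairs $(\bP^{1},\mathcal{O}(m))$, which carry the standard $\bC^{*}$-action; this disposes of the fact that a general projective $C$ need not admit a torus action.

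On $(\bP^{1},\mathcal{O}(m))$ I would run the localization of Section \ref{subsec:loc}. The generating series factors over the two fixed points as $H_{\Psi,\Phi}(\bP^{1},\mathcal{O}(m))(t)=\prod_{i=1}^{2}H_{\Psi,\Phi}(\bC,\mathcal{E}_{1}^{(a_{i})})(w_{i})$, and each factor is exactly the local series evaluated in (\ref{eqn:Cn}), hence equals $\exp\!\big(\sum_{n\ge 1}\tfrac{z^{n}}{n}\tfrac{1-y^{n}q^{na_{i}}}{1-q^{nw_{i}}}\big)$, with $a_{i}$ the weight of $\mathcal{O}(m)$ and $w_{i}$ the tangent weight at the $i$-th fixed point. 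Multiplying the two factors adds the exponents, so that
\[
\log H_{\Psi,\Phi}(\bP^{1},\mathcal{O}(m))(t)=\sum_{n\ge 1}\frac{z^{n}}{n}\sum_{i=1}^{2}\frac{1-y^{n}q^{na_{i}}}{1-q^{nw_{i}}}.
\]
The crucial recognition is that the inner sum is, by the holomorphic Lefschetz (Atiyah--Bott) fixed-point formula, the equivariant Euler characteristic $\chi(\bP^{1},\Lambda_{-y^{n}}\mathcal{O}(m))(q^{n})$: indeed $\Lambda_{-y^{n}}\mathcal{O}(m)=\mathcal{O}-y^{n}\mathcal{O}(m)$ restricts to weight $1-y^{n}q^{na_{i}}$ at the $i$-th fixed point, while $(1-q^{nw_{i}})^{-1}$ is the matching inverse equivariant Euler class.

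Passing to the non-equivariant limit $q\to 1$ (legitimate because $\bP^{1}$ is projective, exactly as in Section \ref{subsec:loc}) turns $\chi(\bP^{1},\Lambda_{-y^{n}}\mathcal{O}(m))(q^{n})$ into $\chi(\bP^{1},\Lambda_{-y^{n}}\mathcal{O}(m))$, giving the sought identity on the test pairs; universality then propagates it to every $(C,L)$. As a check, and to exhibit the closed form, feed $\chi(C,\Lambda_{-y^{n}}L)=(1-g)-y^{n}(d+1-g)$ from Riemann--Roch into the exponent and sum the two logarithms; with $e=2-2g$ this yields $(1-z)^{-e/2}(1-yz)^{d+e/2}$ and pins down $A^{n}_{0,1}=-y^{n}/n$, $A^{n}_{1,0}=(1-y^{n})/(2n)$.

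I expect the middle step to be the main obstacle: showing that the sum over torus-fixed points of the local series (\ref{eqn:Cn}) collapses, coefficient by coefficient in $z$, onto the single curve-level Euler characteristic $\chi(C,\Lambda_{-y^{n}}L)$. This is precisely the compatibility of the $\lambda$-operation $\Lambda_{-y^{n}}$ with equivariant localization -- the Atiyah--Bott formula applied at the torus element $q^{n}$ -- together with the interchange of the $q\to 1$ limit with the $z$-expansion and with the exponential. It is also here that the bridge between the two packagings of the curve data is made precise: the product-over-fixed-points structure assembles the invariants $\chi(C,\Lambda_{-y^{n}}L)$ into the exponential generating series of (\ref{eqn:Cn}) rather than into a bare power series, and confirming this exponential assembly is the delicate point that promotes the local identity (\ref{eqn:Cn}) to its projective form.
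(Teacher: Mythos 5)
Your proof is correct and follows essentially the same route the paper intends: its one-line justification (``the discussion in the last subsection implies that (\ref{eqn:Cn}) can be generalized'') is exactly your argument --- localization writes the equivariant series as a product over fixed points of the local series (\ref{eqn:Cn}), the sum of the local exponents is recognized via Atiyah--Bott as $\chi(C,\Lambda_{-y^n}L)(q^n)$, and Theorem \ref{thm:Universal} together with the nonequivariant limit transports the identity from the test pairs $(\bP^1,\mathcal{O}(m))$ to arbitrary $(C,L)$. You were also right to read the right-hand side as $\exp\bigl(\sum_{n\ge 1}\frac{z^n}{n}\chi(C,\Lambda_{-y^n}L)\bigr)$: as literally printed (a bare power series) the proposition is inconsistent with (\ref{eqn:Cn}) --- for $(\bP^1,\mathcal{O})$ the printed right-hand side gives $1-y^2$ at order $z^2$ while both the left-hand side and the exponential form give $1-y$ --- so the exponential form you prove is the intended statement.
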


To prove Theorem \ref{thm:Cn_1}, we only need to prove the following lemma by using (\ref{eqn:Cn}):

\begin{lemma}
\begin{eqnarray*}
&& \sum_{n=0}^{\infty} z^n \int^t_{C^{[n]}} c^t((\mathcal{E}_1^A)^{[n]}) =
\exp ( \sum_{n=1}^{\infty} (-1)^{n+1} z^n\int^t_{\bC} c^t(\mathcal{E}_1^A)),
\end{eqnarray*}
where $A=(a)$, $a\in\bZ$ is the equivariant weight of $\mathcal{O}_{\bC}$.
\end{lemma}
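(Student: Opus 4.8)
The plan is to evaluate both sides by equivariant localization at the unique torus-fixed point of $\bC^{[n]}$ and then to sum the resulting series in closed form, in direct parallel with the derivation of (\ref{eqn:Cn}). The left-hand side becomes an explicit hypergeometric-type series, the right-hand side an explicit exponential, and both collapse to the same power of $(1+z)$.

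First I would specialize the localization set-up of Section \ref{subsec:loc}: take $\Psi$ to be the total Chern class, so $\psi(x)=1+x$, and $\Phi=1$, so $\phi\equiv 1$. The weight decompositions $T_\bC^{[n]}=q^{-1}+\cdots+q^{-n}$ and $(\mathcal{E}_1^A)^{[n]}=q^{a}(1+q+\cdots+q^{n-1})$ give the equivariant Chern roots $-t,-2t,\dots,-nt$ of the tangent space and $at,(a+1)t,\dots,(a+n-1)t$ of the tautological bundle. Hence the localization formula yields
\[
\int^t_{\bC^{[n]}}c^t\big((\mathcal{E}_1^A)^{[n]}\big)=\prod_{s=1}^{n}\frac{1+(a+s-1)t}{-st},
\]
which is the additive counterpart of the $n$-th summand $\prod_{s=1}^n\frac{1-yq^aq^{s-1}}{1-q^s}$ on the left of (\ref{eqn:Cn}).

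Next I would sum the series. Setting $\beta=\tfrac1t+a-1$ one has $\prod_{s=1}^n\big(1+(a+s-1)t\big)=t^n(\beta+1)_n$ with $(\cdot)_n$ the rising factorial, so the $n$-th coefficient equals $\frac{(-1)^n}{n!}(\beta+1)_n$. The generalized binomial theorem $\sum_{n\ge0}\frac{(\alpha)_n}{n!}x^n=(1-x)^{-\alpha}$ — which is exactly the $q\to1$ degeneration of the $q$-binomial (Cauchy) identity used to prove (\ref{eqn:Cn}) — then gives
\[
\sum_{n=0}^{\infty}z^n\int^t_{\bC^{[n]}}c^t\big((\mathcal{E}_1^A)^{[n]}\big)=(1+z)^{-(1+at)/t}.
\]
For the right-hand side I would compute the single equivariant integral on $\bC$: the tangent weight at the fixed point is $-t$ and $c^t(\mathcal{E}_1^A)=1+at$, whence $\int^t_\bC c^t(\mathcal{E}_1^A)=\frac{1+at}{-t}$. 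Since $\log(1+z)=\sum_{n\ge1}\frac{(-1)^{n+1}}{n}z^n$, the exponential on the right equals $\exp\!\big(\tfrac{1+at}{-t}\log(1+z)\big)=(1+z)^{-(1+at)/t}$, matching the left-hand side; taking the nonequivariant limit then recovers Theorem \ref{thm:Cn_1}.

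The individual steps are routine, so the main point to get right is conceptual rather than computational. Unlike the $\Lambda_{-y}$ Euler characteristic of (\ref{eqn:Cn}), whose closed form is the $q$-binomial theorem and whose naive $q\to1$ limit produces the top-Chern (secant) answer, the total Chern class with $\Phi=1$ is a genuinely equivariant, $t$-dependent series summed by the ordinary binomial theorem and is \emph{not} obtained from (\ref{eqn:Cn}) by a single substitution. The bookkeeping crux is therefore to identify the closed form $(1+z)^{-(1+at)/t}$ and to see the factor $\tfrac1n$ emerge from $\log(1+z)$, so that the coefficient of $z^n$ in the logarithm is precisely $\frac{(-1)^{n+1}}{n}\int^t_\bC c^t(\mathcal{E}_1^A)$, in agreement with the exponent of Theorem \ref{thm:Cn_1}.
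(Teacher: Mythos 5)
Your proposal is correct and follows essentially the same route as the paper: localization at the unique fixed point gives $\int^t_{\bC^{[n]}}c^t((\mathcal{E}_1^A)^{[n]})=\prod_{s=1}^{n}\frac{1+(a+s-1)t}{-st}$, and the series is summed to $(1+z)^{-(1+at)/t}$, matching the exponential on the right. The only cosmetic difference is that you invoke the generalized binomial theorem directly, whereas the paper reaches the same summation identity by substituting $q=\exp(\beta t)$, $y=\exp(\beta)$ in (\ref{eqn:Cn}) and letting $\beta\to 0$ --- these are the same identity, as you yourself note.
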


\begin{proof}
Similarly as in \cite{Iqu-Naz-Raz-Sal}, let $q=\exp(\beta t)$, $y=\exp(\beta)$, and take $\beta\rightarrow 0$ in (\ref{eqn:Cn}), one has
\begin{equation*}
\begin{split}
&\sum_{n=0}^{\infty} z^n \prod_{i=1}^n \frac{1+at+(i-1)t}{it}\\
= &\exp ( \sum_{n=1}^{\infty} \frac{z^n}{n}\frac{n+ant}{nt})\\
=&\exp ( \sum_{n=1}^{\infty} \frac{z^n}{n}\frac{1+at}{t}).
\end{split}
\end{equation*}

Hence
\begin{equation*}
\begin{split}
&\sum_{n=0}^{\infty} z^n \int^t_{\bC^{[n]}} c^t((\mathcal{E}^A_1)^{[n]})\\
=&\sum_{n=0}^{\infty} z^n \prod_{i=1}^n \frac{1+at+(i-1)t}{-it}\\
=&\sum_{n=0}^{\infty} (-z)^n \prod_{i=1}^n \frac{1+at+(i-1)t}{it}\\
=&\exp ( \sum_{n=1}^{\infty} \frac{(-z)^n}{n}\frac{1+at}{t})\\
=&\exp ( \sum_{n=1}^{\infty} \frac{(-1)^{n+1}}{n} z^n\frac{1+at}{-t})\\
=&\exp ( \sum_{n=1}^{\infty} \frac{(-1)^{n+1}}{n} z^n\int^t_{\bC} c^t(\mathcal{E}_1^A)).
\end{split}
\end{equation*}

\end{proof}

\subsection{Proof of Theorem \ref{thm:Seg1}}

We also have an equivariant version of (\ref{eqn:Seg1}) of $\bC$. However, it is also difficult to compute. We have to find some other way to give a proof.

Denote $\mathcal{O}(d)$ over $\bP^1$ by $L_d$ and $\Int_{(\bP^1)^{[n]}}s(L_d^{[n]})$ by $N_n^d$. As it has been discussed, Theorem \ref{thm:Seg1} is true if the following lemma holds:

\begin{lemma} \label{lem:SegP01}

\begin{eqnarray} \label{eqn:Seg0P1}
&& \sum_{n=0}^{\infty} z^n N_n^0=
\frac{(1-k)^{2}}{1-2k},
\end{eqnarray}
and
\begin{eqnarray} \label{eqn:Seg1P1}
&& \sum_{n=0}^{\infty} z^n N_n^{-1}=
\frac{1-k}{1-2k}.
\end{eqnarray}
Here $z=k(1-k)$.

\end{lemma}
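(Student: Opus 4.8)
The plan is to prove Lemma \ref{lem:SegP01} by explicit torus localization on $(\bP^1)^{[n]}$, reducing everything to a sum over fixed points and then resumming the resulting generating series into the closed forms on the right-hand side. I would begin by setting up the standard $\bC^*$-action on $\bP^1$ with two fixed points, say $P_0$ and $P_\infty$, whose cotangent weights are $t$ and $-t$. The induced action on $(\bP^1)^{[n]}$ has fixed points labeled by pairs $(n_0, n_\infty)$ with $n_0 + n_\infty = n$, exactly as described in Section \ref{subsec:loc}. For the Segre class I would use the fact that $s(L_d^{[n]}) = c(-L_d^{[n]}) = 1/c(L_d^{[n]})$, so that in the localization formula each factor $\psi$ is taken to be $\psi(x) = 1/(1+x)$, the generating expression for the inverse total Chern class. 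Concretely, for $L_0 = \mathcal{O}$ the equivariant weight of $L_d$ at $P_0$ and $P_\infty$ is $0$, while for $L_{-1} = \mathcal{O}(-1)$ the weights differ by a twist; tracking these weights carefully through the local contributions $\prod_{s=1}^{n_0} \frac{1}{-st}\cdot\frac{1}{1 + (s-1)t}$ at $P_0$ (and the analogous product with $t \to -t$ at $P_\infty$) is the core computation.

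Once the localization expression is assembled, the next step is to recognize that the contribution at each of the two fixed points factorizes, so the full generating series is a product of two single-variable series, one in the weight $t$ and one in $-t$. Following the strategy already used for Theorem \ref{thm:Cn_1}, I would take $q = \exp(\beta t)$, $y = \exp(\beta)$, and pass to the nonequivariant limit $\beta \to 0$ (equivalently extract the $t^0$ coefficient), which collapses the equivariant integral to the ordinary integral $N_n^d$. The key input here is identity (\ref{eqn:Cn}), which already gives the generating series for $\chi(\bC^{[n]}, \Lambda_{-y}(\mathcal{E}_1^A)^{[n]})$ in exponential product form; specializing and taking the limit as in the proof of the preceding Lemma should convert the two-fixed-point product into an explicit closed-form expression in $z$. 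The two cases $d = 0$ and $d = -1$ differ only in the equivariant weights assigned to the line bundle at the fixed points, so both will emerge from the same master computation with different weight specializations.

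The final step is to verify that the resulting rational function of $z$ matches $\frac{(1-k)^2}{1-2k}$ and $\frac{1-k}{1-2k}$ respectively under the substitution $z = k(1-k)$. This is best checked by inverting the relation: from $z = k(1-k)$ one has $k = \frac{1 - \sqrt{1 - 4z}}{2}$, so $1 - 2k = \sqrt{1 - 4z}$ and $1 - k = \frac{1 + \sqrt{1 - 4z}}{2}$, and I would confirm that the localization output, re-expressed in $\sqrt{1-4z}$, agrees termwise with these. The appearance of $\sqrt{1-4z}$ and the Catalan-type binomial coefficients $\binom{2n-1}{n-1}$ and the power $4^{n-1}$ visible in the statement of Theorem \ref{thm:Seg1} strongly signals that the localization sum over $(n_0, n_\infty)$ is a convolution of central-binomial generating functions, so the resummation should be a routine (if delicate) application of the generating-function identity $\sum_n \binom{2n}{n} z^n = (1-4z)^{-1/2}$.

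The hard part will be the bookkeeping of the equivariant weights of $L_{-1}^{[n]}$ at the two fixed points and ensuring the two local contributions combine correctly after the nonequivariant limit; in particular, handling the degree-$(-1)$ twist means the weight of the line bundle is no longer symmetric between $P_0$ and $P_\infty$, so the two factors in the product are genuinely different series, and getting the power of $(1-k)$ in the numerator to come out as $2$ versus $1$ hinges entirely on this asymmetry. I expect the $d = 0$ case to be nearly immediate from the symmetric product of two identical factors, while the $d = -1$ case requires the careful asymmetric weight analysis; once both single-point series are in hand, matching to the closed forms via the $z = k(1-k)$ substitution is mechanical.
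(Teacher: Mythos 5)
Your localization setup coincides with the paper's, but the central evaluation step of your argument does not go through. The gap is your reliance on identity (\ref{eqn:Cn}) together with the $q=e^{\beta t}$, $y=e^{\beta}$, $\beta\to 0$ limit: that identity computes $\chi(\bC^{[n]},\Lambda_{-y}(\mathcal{E}_1^A)^{[n]})$, and its nonequivariant limit produces the \emph{total Chern class} of the tautological bundle --- this is precisely how Theorem \ref{thm:Cn_1} is proved --- not its Segre class. There is no $K$-theory class analogous to $\Lambda_{-y}$ whose cohomological limit yields $1/c$, and you cannot simply invert (\ref{eqn:Cn}), since inversion does not commute with the sum over $n$. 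The paper flags exactly this obstruction at the start of its proof (``We also have an equivariant version \dots\ However, it is also difficult to compute. We have to find some other way to give a proof''). A second, related problem: although the equivariant generating series over $(\bP^1)^{[n]}$ does factor into two single-fixed-point series in $t$ and $-t$, each factor diverges as $t\to 0$ (its $z^k$ coefficient carries a $t^{-k}$ pole), and the quantity you need is the diagonal coefficient of $x^n z^n t^0$, which entangles the two factors. So you cannot pass to the nonequivariant limit factor by factor, and the sum over $(n_0,n_\infty)$ is not a naive convolution of central-binomial generating functions; the individual fixed-point contributions to $N_n^d$ are complete homogeneous symmetric polynomials in the weights, not central binomial coefficients.

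What the paper actually does to close this gap is evaluate the full two-fixed-point localization sum in closed form: after substituting $y=1/(tx)$, the sum over $k$ becomes a Chu--Vandermonde sum, yielding $N_n^d(t)=\binom{2n-2-d}{n}x^n\big/\prod_{i=0}^{n-1}(1+(d-i)xt)(1-ixt)$ and hence $N_n^d=\binom{2n-2-d}{n}$ (Lemma \ref{lem:Segd}). The generating function of $N_n^0=\binom{2n-2}{n}$ is $\frac{1-2z+\sqrt{1-4z}}{2\sqrt{1-4z}}$, and substituting $z=k(1-k)$, so that $\sqrt{1-4z}=1-2k$ and $1-k=\frac{1+\sqrt{1-4z}}{2}$, gives (\ref{eqn:Seg0P1}); the case $d=-1$ is parallel. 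Your final step --- re-expressing everything in $\sqrt{1-4z}$ --- is in the right spirit and is exactly how the paper finishes, but without an actual evaluation of $N_n^d$ there is no closed form to substitute into, and the route you propose for obtaining one is the one the paper explicitly rules out.
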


We will use the localization formula to prove this lemma. Recall that the homogeneous coordinates on $\bP^1$ are given by $[\zeta_1:\zeta_2]$ and there is a torus-action on $\bP^1$: $$q \cdot [\zeta_1:\zeta_2] =[\zeta_1: q \cdot \zeta_2]$$.

There are two fixed points $P_1=[1:0]$ and $P_2=[0:1]$ on $\bP^1$. We choose the canonical lifting to the tangent bundle of $\bP^1$ and the weight decompositions of the cotangent space at $P_1$ and $P_2$ are given by $q^{-1}$ and $q$ respectively. We also choose a lifting to $L_d$ such that the weight decomposition of $L_d$ is given by 
$L_d|_{P_1}=1$ and $L_d|_{P_1}=q^{-d}$. Denote the equivariant integral $\Int_{(\bP^1)^{[n]}}s^t_x(L_d^{[n]})$ by $N_n^d(t)$, where $$s^t_x(L_d^{[n]})=\ds\frac{1}{c^t_x(L_d^{[n]})}=\ds\frac{1}{1+xc^t_1(L_d^{[n]})+\cdots+x^nc^t_n(L_d^{[n]})}$$ is the equivariant total Segre class. $N_n^d$ is the coefficient of $x^n$ in $N_n^d(t)$. By localization formula one has
\begin{equation*}
N_n^d(t)=\sum\limits_{k=0}^{n}\prod\limits_{i=1}^{k}\frac{1}{(1-x(i-1)t)(it)}\prod\limits_{i=1}^{n-k}\frac{1}{(1+x((i-1)t-dt))(-it)}.
\end{equation*}
Here we write $\prod\limits_{i=s}^{s-1}(\cdot)=1$ for convenient notation.

We have the following lemma:

\begin{lemma} \label{lem:Segd}
One has
\begin{equation*}
N_n^d(t)=\frac{\dbinom{2n-2-d}{n} x^n}{\prod\limits_{i=0}^{n-1}(1+(d-i)xt)(1-ixt)},
\end{equation*}
and hence it is easy to see by comparing the coefficient that $N_n^d=\ds\binom{2n-2-d}{n}$.
\end{lemma}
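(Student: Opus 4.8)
The plan is to evaluate the fixed-point sum for $N_n^d(t)$ in closed form by recognizing it as a terminating Gauss hypergeometric series of argument $1$ and applying the Chu--Vandermonde summation theorem; the asserted product formula then follows, and reading off the coefficient of $x^n$ is immediate.

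First I would reduce the summand to a ratio of factorials. Writing $w=xt$ and introducing a formal variable $s=1/w$, every linear factor becomes $1-(i-1)w=(s-i+1)/s$ and $1+((i-1)-d)w=(s-d+i-1)/s$, so the two products over $i$ turn into a falling and a rising factorial: $\prod_{i=1}^{k}(1-(i-1)w)=s^{-k}\,s^{\underline{k}}$ and $\prod_{i=1}^{n-k}(1+((i-1)-d)w)=s^{-(n-k)}(s-d)_{n-k}$, where $(a)_m=a(a+1)\cdots(a+m-1)$ is the rising factorial. Pulling out $\prod_{i=1}^{k}(it)=k!\,t^k$ and $\prod_{i=1}^{n-k}(-it)=(-1)^{n-k}(n-k)!\,t^{n-k}$ and using $s^{\underline{k}}=(-1)^k(-s)_k$, all powers of $s$ and $t$ collect into a single prefactor and one is left with
\[
N_n^d(t)=\frac{(-1)^n s^n}{t^n}\sum_{k=0}^{n}T_k,\qquad T_k=\frac{1}{k!\,(n-k)!\,(-s)_k\,(s-d)_{n-k}}.
\]

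Next I would identify the hypergeometric structure. A direct computation of the term ratio gives $T_{k+1}/T_k=\dfrac{(k-n)\,\bigl(k-(s-d+n-1)\bigr)}{(k+1)\,(k-s)}$, which is exactly the ratio defining the terminating series ${}_2F_1(-n,\,d-n+1-s;\,-s;\,1)$. Because the numerator parameter $-n$ is a non-positive integer, the Chu--Vandermonde identity ${}_2F_1(-n,b;c;1)=(c-b)_n/(c)_n$ holds as a polynomial identity in $s$ and yields $\sum_{k=0}^{n}T_k=T_0\cdot(n-d-1)_n/(-s)_n$ with $T_0=1/(n!\,(s-d)_n)$. Finally I would reassemble: since $\prod_{i=0}^{n-1}(1-iw)=s^{-n}(-1)^n(-s)_n$ and $\prod_{i=0}^{n-1}(1-(d-i)w)=s^{-n}(s-d)_n$, the Pochhammer symbols $(-s)_n$ and $(s-d)_n$ recombine with the prefactor to produce $x^n$ over the product $\prod_{i=0}^{n-1}(1-iw)(1-(d-i)w)$, i.e.\ the denominator of the asserted formula, while the surviving constant is $(n-d-1)_n/n!=\binom{2n-2-d}{n}$ by the standard identity $(x)_n/n!=\binom{x+n-1}{n}$. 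Extracting $[x^n]$ is then trivial because the denominator is a power series in $w=xt$ with constant term $1$, so $N_n^d=\binom{2n-2-d}{n}$.

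The conceptual crux is spotting that the fixed-point sum is a balanced terminating ${}_2F_1$ at $1$; once the parameters are correctly read off, Chu--Vandermonde does all the work and the recognition of $(n-d-1)_n/n!$ as the claimed binomial coefficient is routine. The genuinely delicate step is the first one: converting the two products into Pochhammer symbols while keeping exact track of the signs coming from the tangent weights $-it$ at the second fixed point (and of the constant factors produced by $i=1$ and by $d=i-1$), since a misplaced sign here is exactly what shifts the parameters of the hypergeometric series, and hence the final binomial coefficient and the precise linear factors in the denominator.
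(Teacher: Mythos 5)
Your argument is correct and is essentially the paper's own proof: both substitute $y=1/(xt)$ (your $s$), clear denominators, and reduce the fixed-point sum to the Chu--Vandermonde identity --- the paper in the binomial-convolution form $\sum_k\binom{-y+n-1}{n-k}\binom{y+n-1-d}{k}=\binom{2n-2-d}{n}$, you in the equivalent terminating ${}_2F_1$-at-$1$ form. One remark: the closed form you actually derive has denominator $\prod_{i=0}^{n-1}(1-(d-i)xt)(1-ixt)$ rather than the $\prod_{i=0}^{n-1}(1+(d-i)xt)(1-ixt)$ asserted in the lemma, and your sign is the correct one (the paper's intermediate common denominator $\prod_{i=0}^{n-1}(y+(d-i))$ should read $\prod_{i=0}^{n-1}(y-(d-i))$); since both denominators are $1+O(x)$, this discrepancy does not affect the conclusion $N_n^d=\binom{2n-2-d}{n}$.
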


\begin{proof}
\begin{equation*}
\begin{split}
&N_n^d(t)\\
=&\sum\limits_{k=0}^{n}\prod\limits_{i=1}^{k}\frac{1}{(1-x(i-1)t)(it)}\prod\limits_{i=1}^{n-k}\frac{1}{(1+x((i-1)t-dt))(-it)}\\
\overset{y=\frac{1}{tx}}{=}&\sum\limits_{k=0}^{n}\prod\limits_{i=1}^{k}\frac{1}{(1-\frac{(i-1)}{y})(it)}\prod\limits_{i=1}^{n-k}\frac{1}{(1+\frac{(i-1)-d}{y})(-it)}\\
=&\sum\limits_{k=0}^{n}y^n \prod\limits_{i=1}^{k}\frac{1}{(y-(i-1))(it)}\prod\limits_{i=1}^{n-k}\frac{1}{(y+((i-1)-d))(-it)}\\
=&\frac{y^n}{\prod\limits_{i=0}^{n-1}(y+(d-i))(y-i)t^n} \sum\limits_{k=0}^{n} \frac{\prod\limits_{i=k}^{n-1}(-y+i)\prod\limits_{i=n-k}^{n-1}(y+i-d)}{k!(n-k)!}\\
=&\frac{y^n}{\prod\limits_{i=0}^{n-1}(y+(d-i))(y-i)t^n} \sum\limits_{k=0}^{n} \binom{-y+n-1}{n-k} \binom{y+n-1-d}{k}\\
=&\frac{y^n}{\prod\limits_{i=0}^{n-1}(y+(d-i))(y-i)t^n}\dbinom{2n-2-d}{n}.
\end{split}
\end{equation*}

The last identity is a special case of the Chu-Vandermonde's identity (cf. \cite{koepf2014hypergeometric}, P. 45 Exercise 3.2 (a)).

Take $x=\ds\frac{1}{ty}$ and one gets $$N_n^d(t)=\ds\frac{\ds\binom{2n-2-d}{n} x^n}{\prod\limits_{i=0}^{n-1}(1+(d-i)xt)(1-ixt)}.$$

\end{proof}

Now we are going to prove (\ref{eqn:Seg0P1}).

($N_n^0$) is  the integer sequence A001791 in the on-line encyclopedia of integer sequences \cite{sloane2007line} and the generating series is given by
\begin{equation*}
\sum\limits_{n=0}^{\infty} N_n^0 z^n=\frac{1-2z+\sqrt{1-4z}}{2\sqrt{1-4z}}.
\end{equation*}

Take $z=k(1-k)$ and one can easily get (\ref{eqn:Seg0P1}).

Applying similar arguments we can prove (\ref{eqn:Seg1P1}), so we omit the proof here.


\begin{thebibliography}{99}

\bibitem{arbarello2011geometry}
Arbarello E, Cornalba M, Griffiths P.
\newblock Geometry of Algebraic Curves: Volume II with a contribution by Joseph
  Daniel Harris, volume 268.
\newblock Springer Science \& Business Media, 2011


\bibitem{Ati-Sin}
Atiyah M~F, Singer I~M.
\newblock The index of elliptic operators: III.
\newblock Annals of mathematics,  546--604 (1968)


\bibitem{beltrametti1991zero}
Beltrametti M, Sommese A~J.
\newblock Zero cycles and k-th order embeddings of smooth projective surfaces.
\newblock Proceedings of Problems in the theory of surfaces and their
  classification, Symposia Math, volume~32, 1991.
\newblock  33--48


\bibitem{beltrametti1990k}
Beltrametti M, Sommese A~J.
\newblock On k-spannedness for projective surfaces.
\newblock Springer (1990)


\bibitem{Bor-Lib}
Borisov L, Libgober A.
\newblock McKay correspondence for elliptic genera.
\newblock Annals of mathematics,  1521--1569 (2005)


\bibitem{catanese1990d}
Catanese F, G{\oe}ttsche L.
\newblock d-very-ample line bundles and embeddings of {H}ilbert schemes of
  0-cycles.
\newblock manuscripta mathematica, 68(1):337--341 (1990)


\bibitem{cotterill2011geometry}
Cotterill E.
\newblock Geometry of curves with exceptional secant planes: linear series
  along the general curve.
\newblock Mathematische Zeitschrift, 267(3-4):549--582 (2011)


\bibitem{Ell-Got-Leh}
Ellingsrud G, G{\"o}ttsche L, Lehn M.
\newblock On the cobordism class of the {H}ilbert scheme of a surface.
\newblock Journal of Algebraic Geometry, 10:81--100 (2001)


\bibitem{ellingsrud1996bott}
Ellingsrud G, Str${\o}$mme S.
\newblock Bott’s formula and enumerative geometry.
\newblock Journal of the American Mathematical Society, 9(1):175--193 (1996)


\bibitem{Got}
G{\"o}ttsche L.
\newblock The {B}etti numbers of the {H}ilbert scheme of points on a smooth
  projective surface.
\newblock Mathematische Annalen, 286(1):193--207 (1990)


\bibitem{Got-Soe}
G{\"o}ttsche L, Soergel W.
\newblock Perverse sheaves and the cohomology of {H}ilbert schemes of smooth
  algebraic surfaces.
\newblock Mathematische Annalen, 296(1):235--245 (1993)


\bibitem{Iqu-Naz-Raz-Sal}
Iqbal A, Nazir S, Raza Z, et~al.
\newblock Generalizations of Nekrasov-Okounkov Identity.
\newblock Annals of Combinatorics, 16(4):745--753 (2012)


\bibitem{koepf2014hypergeometric}
Koepf W.
\newblock Hypergeometric summation.
\newblock Braunschweig/Wiesbaden: Vieweg, 2014


\bibitem{le1987formules}
Le~Barz P.
\newblock Formules pour les multisecantes des surfaces algebriques.
\newblock L'Ens. Math, 33:1--66 (1987)


\bibitem{le2007formule}
Le~Barz P.
\newblock Sur une formule de {C}astelnuovo pour les espaces multis{\'e}cants.
\newblock Bollettino dell unione matematica italiana. Sezione B: articoli di
  ricerca matematica, 10(2):381--388 (2007)


\bibitem{Leh}
Lehn M.
\newblock Chern classes of tautological sheaves on {H}ilbert schemes of points
  on surfaces.
\newblock Inventiones mathematicae, 136(1):157--207 (1999)


\bibitem{Mar-Opr}
Marian A, Oprea D.
\newblock Tautological integrals over symmetric powers of curves.\\
\newblock http://math.ucsd.edu/~doprea/segre-curves.pdf (2015)


\bibitem{Mar-Opr-Pan}
Marian A, Oprea D, Pandharipande R.
\newblock Segre classes and Hilbert schemes of points.
\newblock arXiv preprint arXiv:1507.00688 (2015)


\bibitem{Nak2}
Nakajima H.
\newblock Lectures on {H}ilbert schemes of points on surfaces.
\newblock Number~18, American Mathematical Soc., 1999


\bibitem{rennemo2012universal}
Rennemo J~V.
\newblock Universal polynomials for tautological integrals on {H}ilbert
  schemes.
\newblock arXiv preprint arXiv:1205.1851 (2012)


\bibitem{sloane2007line}
Sloane N~J.
\newblock The on-line encyclopedia of integer sequences.
\newblock Pubilished electronically at https://oeis.org


\bibitem{tikhomirov1994standard}
Tikhomirov A.
\newblock Standard bundles on a {H}ilbert scheme of points on a surface.
\newblock Proceedings of Algebraic Geometry and its Applications. Vieweg+ Teubner Verlag, 1994: 183-203


\bibitem{tikhomirov1994top}
Tikhomirov A, Troshina T.
\newblock Top {S}egre Class of a Standard Vector Bundle $\varepsilon_D^4$ on
  the {H}ilbert Scheme $Hilb^4S$ of a Surface ${S}$.
\newblock Proceedings of Algebraic Geometry and its Applications. Vieweg+ Teubner Verlag, 1994: 205--226


\bibitem{troshina1994degree}
Troshina T.
\newblock The degree of the top {S}egre class of the standard vector bundle on
  the {H}ilbert scheme ${H}ilb^4S$ of an algebraic surface ${S}$.
\newblock Izvestiya Rossiiskoi Akademii Nauk. Seriya Matematicheskaya, 1993, 57(6): 106-129

\bibitem{wang2014tautological}
Wang Z, Zhou J.
\newblock Tautological sheaves on {H}ilbert schemes of points.
\newblock Journal of Algebraic Geometry, 23(4):669--692 (2014)

\end{thebibliography}
\end{document}